\newtheorem{theorem}{Theorem}[section]
\newtheorem{proposition}[theorem]{Proposition}
\theoremstyle{remark}
\newtheorem{remark}{Remark}[section]
\theoremstyle{definition}
\newtheorem{example}{Example}[section]
\newcommand{\Z}{\mathbb{Z}}
\newcommand{\R}{\mathbb{R}}
\newcommand{\Rd}{\R^d}
\newcommand{\bO}{\mathcal{O}}
\newcommand{\bq}{\begin{equation}}
\newcommand{\eq}{\end{equation}}
\newcommand{\abs}[1]{\vert#1\vert}
\newcommand{\Abs}[1]{\left\vert#1\right\vert}
\DeclareMathOperator{\argmin}{argmin}
\newcommand{\boxalgorithm}[2]{
\begin{algorithm}#1
\begin{justifying}\noindent
#2
\end{justifying}
\end{algorithm}}
\newcommand{\dx}{\:dx}
\begin{document}

\title{A simplified threshold dynamics algorithm for isotropic surface energies}

\author{Tiago Salvador}
\address{Department of Mathematics, University of Michigan, 530 Church St. Ann Arbor, MI 48105 ({\tt saldanha@umich.edu}).}

\author{Selim Esedo\={g}lu}
\address{Department of Mathematics, University of Michigan, 530 Church St. Ann Arbor, MI 48105 ({\tt esedoglu@umich.edu}).}

\date{\today}

\begin{abstract}
We present a simplified version of the threshold dynamics algorithm given in \cite{SelimFelix}.
The new version still allows specifying $N\choose 2$ possibly distinct surface tensions and $N\choose 2$ possibly distinct mobilities for a network with $N$ phases, but achieves this level of generality without the use of retardation functions.
Instead, it employs linear combinations of Gaussians in the convolution step of the algorithm.
Convolutions with only two distinct Gaussians is enough for the entire network, maintaining the efficiency of the original thresholding scheme.
We discuss stability and convergence of the new algorithm, including some counterexamples in which convergence fails.
The apparently convergent cases include unequal surface tensions given by the Read \& Shockley model and its three dimensional extensions, along with equal mobilities, that are a very common choice in computational materials science.
\end{abstract}


\keywords{threshold dynamics, MBO algorithm, $\Gamma$-convergence, stability}

\maketitle

\setcounter{tocdepth}{1}

\section{Introduction}

Threshold dynamics - also known as diffusion or convolution generated motion - is a very efficient algorithm originally proposed by Merriman, Bence and Osher (MBO) in \cite{MBO92,MBO94} for simulating the motion by mean curvature flow of an interface. It is based on the observation that the level-set of a distance function or characteristic function evolved under the heat equation moves in the normal direction with velocity equal to the mean curvature of the level-set surface. The method alternately diffuses (through convolution with a kernel) and sharpens characteristic functions of regions (by pointwise thresholding). In its simplest form (for isotropic, two-phase mean curvature flow), it is given as follows:
\boxalgorithm{\caption{(in \cite{MBO92})}\label{alg:MBO}}{Given the initial condition $\Sigma^0$ and time step size $\delta t$, to obtain the approximate solution $\Sigma^{k+1}$ at time $t = (\delta t)(k+1)$ from $\Sigma^k$ at time $t = (\delta t) k$, alternate the following steps:
\begin{enumerate}
	\item Convolution step:
\[
\psi^k = \frac{1}{(\delta t)^{\frac{d}{2}}}K\left(\frac{\cdot}{\sqrt{\delta t}}\right) \ast \textbf{1}_{\Sigma^k}.
\]
	\item Thresholding set:
\[
\Sigma^{k+1} = \left\{x: \psi^k(x) \geq \frac{1}{2}\int_{\R^d} K(x)\dx\right\}.
\]
\end{enumerate}}

\noindent
The convolution kernel $K:\Rd\to\R$ was chosen in \cite{MBO94} to be the Gaussian
\bq\label{eq:Gaussian}
G(x) = \frac{1}{(4\pi)^{\frac{d}{2}}}\exp\left(-\frac{\abs{x}^2}{4}\right),
\eq
but the possibility of choosing other kernels is also mentioned in \cite{MBO92}. For this particular choice of kernel, the boundary of the set $\partial \Sigma^k$ can be shown to evolve, to leading order, by mean curvature motion; see e.g. \cite{RuuthEfficientAlgorithms} for a truncation error analysis, and e.g \cite{EvansMC,IshiiTDPropagatingFronts} for proofs of convergence. In particular in \cite{IshiiTDPropagatingFronts},  positivity of the kernel is essential since it guarantees that both steps of Algorithm \ref{alg:MBO} are monotone, thereby allowing the scheme to enjoy a comparison principle which is a key tool in the convergence proof.

Among the benefits of the MBO algorithm are (i) implicit representation of the interface as in the phase field or level set methods, allowing for graceful handling of topological changes, (ii) unconditional stability, where the time step size is restricted only by accuracy considerations, and (iii) very low per time step cost when implemented on uniform grids.

Motion by mean curvature arises as $L^2$ gradient descent for perimeter of sets. Perimeter of sets, in turn, are key to variational models for interfaces in a great variety of applications, ranging from image processing and computer vision (e.g. the Mumford-Shah model \cite{MumfordShahImageSegmentation} for image segmentation) to materials science (e.g. Mullins' model \cite{MullinsGrainBoundaryMotion} for grain boundary motion in polycrystals). More recently, such variational models and their minimization via gradient descent have also been applied in the context of machine learning and artificial intelligence (e.g. graph partitioning models for supervised clustering of data \cite{TDDataSegmentation}). The MBO scheme, its variants, and its extensions have attracted sustained interest in the context of each one of these applications.

\section{Preliminaries and Notation}

We will be concerned with isotropic interfacial energies defined on partitions of a domain $D$ into a maximum prescribed number $N\in\mathbb{N}$ sets.
Let
\[
\mathcal{I}_N = \left\{(i,j) \in \{1,\ldots,N\}\times\{1,\ldots,N\}: i \neq j\right\}
\]
denote pairs of distinct indices.
$D$ will typically be the $d$-dimensional annulus, i.e., a cube in $\R^d$ with periodic boundary conditions. By a partition of $D$, we mean $N$ closed sets $\Sigma_1,\ldots,\Sigma_N \subseteq D$, called phases, that may intersect only through their boundaries:
\[
D = \bigcup_{i=1}^N \Sigma_i \quad \text{and} \quad \Sigma_i \cap \Sigma_j = \left(\partial \Sigma_i\right) \cap \left(\partial \Sigma_i\right) \quad \text{for} \quad (i,j)\in\mathcal{I}_N.
\]
We denote by $\Gamma_{i,j}$ the interface separating $\Sigma_i$ and $\Sigma_j$ which is given by
\[
\Gamma_{i,j} = \left(\partial \Sigma_i\right) \cap \left(\partial \Sigma_i\right).
\]

Let $dH^s$ denote the $s$-dimensional surface area element.
Variational models for microstructure evolution proposed by Mullins \cite{MullinsGrainBoundaryMotion} take the form of the following penalty on partitions of $D$:
\bq
\label{eq:energy_multiphase_anisotropic}
E(\mathbf{\Sigma},\sigma) = \sum_{(i,j) \in \mathcal{I}_N} \int_{\Gamma_{i,j}} \sigma_{i,j}(n_{i,j}(x)) \:dH^{d-1}(x)
\eq
where we write $\bm{\Sigma} = (\Sigma_1,\ldots,\Sigma_N)$ and
$n_{i,j}(x)$ denotes the unit normal on $\Gamma_{i,j}$ pointing into $\Sigma_j$.
The functions $\sigma_{i,j} : \mathbb{S}^{d-1} \to \mathbb{R}^+$ are known as {\em surface tensions} associated with the interfaces $\Gamma_{i,j}$.
They are continuous, even functions that need to satisfy further properties to ensure well posedness of the model.

In this paper we focus on the special \emph{isotropic} case of (\ref{eq:energy_multiphase_anisotropic}) where the surface tensions $\sigma_{i,j}$ are constant but possibly distinct.
The multiphase energy then reduces to
\bq\label{eq:energy_multiphase_isotropic}
E(\bm{\Sigma},\sigma) = \sum_{(i,j) \in \mathcal{I}_N}\sigma_{i,j}H^{d-1}(\Gamma_{i,j}).
\eq
It is convenient to set $\sigma_{i,i} = 0$ and think of $\sigma$ as a symmetric matrix with $0$ along the diagonal and positive entries throughout:
\[
\mathcal{S}_N = \left\{\sigma \in \R^{N\times N}: \sigma_{i,i} = 0 \text{ and } \sigma_{i,j} = \sigma_{j,i} > 0 \text{ for } (i,j) \in \mathcal{I}_N\right\}.
\]
It turns out that the following triangle inequality is necessary and sufficient for the model \eqref{eq:energy_multiphase_isotropic} to be well-posed \cite{braides1990functionate}:
\bq\label{eq:triangle_inequality}
\sigma_{i,j} + \sigma_{j,k} \geq \sigma_{i,k} \quad \text{for any } i, j \text{ and } k.
\eq
We will therefore work mostly with the triangle-inequality-satisfying class of surface tensions:
\[
\mathcal{T}_N = \left\{\sigma \in \mathcal{S}_N: \sigma_{i,j} + \sigma_{j,k} \geq \sigma_{i,k} \text{ for any } i, j \text{ and } k\right\}.
\]

We will study approximations for $L^2$ gradient flow of energies \eqref{eq:energy_multiphase_isotropic} with special interest in the two-dimensional and three-dimensional cases. The normal speed under this flow is given by
\bq\label{normalspeed}
v_\perp(x) = \mu_{i,j} \sigma_{i,j} \kappa_{i,j}(x),
\eq
where $\kappa_{i,j}$ denotes the mean curvature of $\Gamma_{i,j}$. The constants $\mu_{i,j}$ are the \emph{mobilities} associated with the interfaces $\Gamma_{i,j}$. They are positive but otherwise can be chosen arbitrarily. In addition,  a condition known as the \emph{Herring angle condition} \cite{HerringAngleCondition} holds along triple junctions. At a junction formed by the meeting of the three phases $\Sigma_i$, $\Sigma_j$ and $\Sigma_k$, this condition reads
\bq\label{youngslaw}
\sigma_{i,j} n_{i,j} + \sigma_{j,k} n_{j,k} + \sigma_{k,i} n_{k,i} = 0.
\eq
Also known as Young's law in the isotropic setting considered here, this condition determines the opening angles $\theta_i$, $\theta_j$ and $\theta_k$ of the three phases $\Sigma_i$, $\Sigma_j$ and $\Sigma_k$, respectively, in terms of the surface tensions:
\bq\label{angles_relationship}
\frac{\sin(\theta_i)}{\sigma_{j,k}} = \frac{\sin(\theta_j)}{\sigma_{i,k}} = \frac{\sin(\theta_k)}{\sigma_{i,j}}.
\eq

\section{Previous work}\label{sec:previous_work}

In \cite{SelimFelix}, a variational formulation for the original MBO scheme (Algorithm \ref{alg:MBO}) was given. In particular,
it was shown that the following functional defined on sets, with kernel $K$ chosen to be the Gaussian $G$, which had previously been established \cite{AlbertiBellettini,MirandaShortTime} to be a non-local approximation to (isotropic) perimeter, is dissipated by the MBO scheme at every step, regardless of time step size:
\bq\label{eq:lyapunov_twophase}
E_{\sqrt{\delta t}}(\Sigma,K_{\sqrt{\delta t}}) = \frac{1}{\sqrt{\delta t}} \int_{\Sigma^c} K_{\sqrt{\delta t}}\ast \bm{1}_\Sigma \dx,
\eq
where for convenience we write
\begin{equation}
\label{eq:kernelnotation}
K_\epsilon(x) = \frac{1}{\epsilon^d}K\left(\frac{x}{\epsilon}\right).
\end{equation}

Thus, \eqref{eq:lyapunov_twophase} is a Lyapunov functional for Algorithm \ref{alg:MBO}, establishing its unconditional gradient stability. The next proposition from \cite{SelimFelix} illustrates this fact while also underlining the significance of $\widehat{K}$:
\begin{proposition}[from \cite{SelimFelix}]
Let $K$ satisfy
\bq\label{eq:kernel_standard_assumptions}
K(x) \in L^1(\Rd), \quad xK(x) \in L^1(\Rd), \quad \text{and} \quad K(x) = K(-x),
\eq
together with
\bq\label{eq:kernel_positive_mass}
\int_{\Rd} K(x)\dx > 0.
\eq
If $\widehat{K} \geq 0$, Algorithm \ref{alg:MBO} is unconditionally stable: each time step dissipates the energy \eqref{eq:lyapunov_twophase}, regardless of the time step size.
\end{proposition}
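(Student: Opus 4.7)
The plan is to reformulate the MBO step as a concave minimization and then invoke the standard fact that concave functionals are dissipated along the sequence obtained by iteratively minimizing their linearizations. Write $u^k = \mathbf{1}_{\Sigma^k}$, set $\tau = \sqrt{\delta t}$, and observe that
\[
E_\tau(\Sigma) = \frac{1}{\tau}\int (1-u)(K_\tau \ast u)\,dx, \qquad u = \mathbf{1}_\Sigma.
\]
The right-hand side extends naturally to functions $u : D \to [0,1]$, and the first step is to show that this extended functional is concave on that convex set whenever $\widehat{K} \geq 0$.

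The concavity follows from Fourier/Plancherel: the symmetry $K(x)=K(-x)$ implies $\widehat{K}$ is real, and then
\[
\int u(K_\tau \ast u)\,dx \;=\; c_d \int \widehat{K_\tau}(\xi)\,|\widehat{u}(\xi)|^2\,d\xi \;\geq\; 0,
\]
so $u \mapsto \int u (K_\tau \ast u)\,dx$ is a positive semi-definite quadratic form, i.e.\ convex. Since $\int (1-u)(K_\tau \ast u)\,dx = \int K_\tau \ast u\,dx - \int u(K_\tau\ast u)\,dx$ subtracts this convex quadratic from a linear functional, the extended $E_\tau$ is concave in $u$. The integrability hypotheses \eqref{eq:kernel_standard_assumptions} ensure all quantities are well-defined.

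The second step is to identify the MBO update as the minimizer of the linearization of $E_\tau$ at $u^k$. Using $K(x)=K(-x)$ to compute the first variation,
\[
\big\langle \nabla E_\tau(u^k),\,v\big\rangle \;=\; \frac{1}{\tau}\int v(x)\Big(\textstyle\int K - 2\,K_\tau \ast u^k(x)\Big)\,dx,
\]
so minimizing the affine function $u \mapsto E_\tau(u^k) + \langle \nabla E_\tau(u^k), u-u^k\rangle$ over $u : D \to [0,1]$ amounts to a pointwise choice: set $u(x)=1$ where $K_\tau \ast u^k(x) > \tfrac{1}{2}\int K$ and $u(x)=0$ otherwise. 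Since this linear functional is minimized at an extreme point of $[0,1]^D$, the minimizer is indeed a characteristic function, and it coincides exactly with $u^{k+1} = \mathbf{1}_{\Sigma^{k+1}}$ prescribed by Algorithm \ref{alg:MBO}.

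Finally, combining concavity with the minimization property gives the standard ``successive linearization'' inequality chain
\[
E_\tau(u^{k+1}) \;\leq\; E_\tau(u^k) + \big\langle \nabla E_\tau(u^k),\,u^{k+1}-u^k\big\rangle \;\leq\; E_\tau(u^k),
\]
where the first inequality is the concavity estimate (the graph of a concave function lies below its tangent) and the second uses that $u^{k+1}$ minimizes the linearization, so its value at $u^{k+1}$ is no larger than at $u^k$. This yields unconditional dissipation for every $\delta t > 0$. The main obstacle — really the only nontrivial point — is the concavity assertion, which is precisely where the Fourier-positivity hypothesis $\widehat{K} \geq 0$ is used; everything else is a direct computation of the first variation and a pointwise optimization.
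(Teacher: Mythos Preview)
Your argument is correct and is precisely the variational proof from \cite{SelimFelix}: relax to $u\in[0,1]$, use $\widehat{K}\geq 0$ with Plancherel to get concavity of the relaxed energy, identify the thresholding step as the minimizer of the linearization, and conclude via the tangent-line inequality. The present paper does not supply its own proof of this proposition; it is quoted verbatim from \cite{SelimFelix}, so there is nothing further to compare.
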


Moreover, in \cite{SelimFelix} the following, minimizing movements \cite{AlmgrenCurvatureDrivenFlows,LuckhausImplicitTime} interpretation involving \eqref{eq:lyapunov_twophase} for Algorithm \ref{alg:MBO} was given:
\bq\label{eq:minimizing_movement_twophase}
\Sigma^{k+1} = \argmin_{\Sigma} E_{\sqrt{\delta t}}(\Sigma,K_{\sqrt{\delta t}}) + \frac{1}{\sqrt{\delta t}} \int \left(\bm{1}_\Sigma-\bm{1}_{\Sigma_k}\right) K_{\sqrt{\delta t}} \ast \left(\bm{1}_\Sigma-\bm{1}_{\Sigma_k}\right) \dx,
\eq
where the kernel $K$ was again taken to be $G$. This variational formulation is then extended to the multiphase energy \eqref{eq:energy_multiphase_isotropic} where the surface tensions $\sigma_{i,j}$ are constant but possibly distinct. In this case the Lyapunov functional becomes
\bq\label{eq:lyapunov_multiphase_EO}
E_{\sqrt{\delta t}}(\bm{\Sigma},K_{\sqrt{\delta t}}) = \frac{1}{\sqrt{\delta t}} \sum_{(i,j)\in\mathcal{I}_N} \sigma_{i,j} \int_{\Sigma_j} K_{\sqrt{\delta t}} \ast \bm{1}_{\Sigma_i} \dx.
\eq
We also consider a relaxation of \eqref{eq:lyapunov_multiphase_EO}
\bq\label{eq:lyapunov_multiphase_EO_relaxation}
E_{\sqrt{\delta t}}(\bm{u},K_{\sqrt{\delta t}}) = \frac{1}{\sqrt{\delta t}} \sum_{(i,j)\in\mathcal{I}_N} \sigma_{i,j} \int_D u_j K_{\sqrt{\delta t}} \ast u_i \dx
\eq
over the following convex set of functions satisfying a box constraint:
\bq\label{eq:box_constraint}
\mathcal{K} = \left\{\bm{u} \in L^1\left(D,[0,1]^N\right): \sum_{i=1}^N u_i(x) = 1 \text{ a.e. } x \in D\right\}.
\eq
There is a corresponding minimizing movements scheme that can be derived from \eqref{eq:lyapunov_multiphase_EO_relaxation} that leads to the extension of threshold dynamics to the constant but possibly unequal surface tension multiphase energy \eqref{eq:energy_multiphase_isotropic} given in Algorithm \ref{alg:EO}.
\boxalgorithm{\caption{(in \cite{SelimFelix})}\label{alg:EO}}{Given the initial partition $\Sigma_1^0, \ldots, \Sigma_N^0$, to obtain the partition $\Sigma_1^{k+1}, \ldots, \Sigma_N^{k+1}$ at time step $t = (\delta t)(k+1)$ from the partition $\Sigma_1^k, \ldots, \Sigma_N^k$ at time $t = (\delta t)k$:
\begin{enumerate}
	\item Convolution step:
\[
\phi_i^k = K_{\sqrt{\delta t}} \ast \left(\sum_{j=1}^N \sigma_{i,j} \bm{1}_{\Sigma_j^K}\right) \quad i = 1,\ldots,N,
\]
where $K$ is the Gaussian \eqref{eq:Gaussian}.
	\item Thresholding step:
\[
\Sigma_i^{k+1} = \left\{x: \phi_i^k(x) < \min_{j\neq i} \phi_j^k(x)\right\}.
\]
\end{enumerate}
}

This algorithm is however restricted to very specific mobilities: $\mu_{i,j} = \frac{1}{\sigma_{i,j}}$. In \cite{SelimFelix}, a modified algorithm (see Algorithm \ref{alg:EO_retardation}) is proposed which allows for general mobilities by introducing {\em retardation terms}. Both algorithms are shown to be unconditionally gradient stable when the surface tension matrix $\sigma$ is \emph{conditionally negative semi-definite}:
\[
\left\{\sigma \in \mathcal{S}^N: \sum_{i,j=1}^N \sigma_{i,j} \xi_i \xi_j \leq 0 \text{ whenever } \sum_{i=1}^N \xi_i = 0\right\}.
\]
This corresponds to the matrices that are negative semi-definite as quadratic forms on $(1,\ldots,1)^\perp$.
\begin{proposition}[from \cite{SelimFelix}] \label{prop:SelimFelixStability}
Let the surface tension matrix $\sigma \in \mathcal{S}_N$ be conditionally negative semi-definite. Then Algorithms \ref{alg:EO} and \ref{alg:EO_retardation} are unconditionally stable: each time step dissipates energies \eqref{eq:lyapunov_multiphase_EO} and \eqref{eq:lyapunov_multiphase_EO_relaxation}.
\end{proposition}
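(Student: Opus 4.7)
The plan is to use the concavity-plus-linearization framework developed in \cite{SelimFelix}: I would show that, under the conditional negative semi-definiteness of $\sigma$, the relaxed energy \eqref{eq:lyapunov_multiphase_EO_relaxation} is concave on the affine slice $\mathcal{K}$, and then identify the thresholding step of Algorithm \ref{alg:EO} with the pointwise minimizer of the linearization of $E_{\sqrt{\delta t}}$ at the current iterate. Dissipation then follows from the tangent-plane inequality in a single line.

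The first step is concavity of $\bm{u} \mapsto E_{\sqrt{\delta t}}(\bm{u}, K_{\sqrt{\delta t}})$ along admissible perturbations $\bm{v}$, which must satisfy $\sum_{i=1}^N v_i(x) = 0$ a.e.\ since both $\bm{u}$ and $\bm{u}+\bm{v}$ lie in $\mathcal{K}$. As $E_{\sqrt{\delta t}}$ is quadratic, the second variation equals the quadratic part itself, which by Plancherel is
\[
\sum_{i,j} \sigma_{i,j} \int_D v_j \, K_{\sqrt{\delta t}} \ast v_i \dx = \int_{\R^d} \widehat{K}_{\sqrt{\delta t}}(\xi) \sum_{i,j} \sigma_{i,j} \, \widehat{v_i}(\xi) \overline{\widehat{v_j}(\xi)} \, d\xi.
\]
Since $K$ is the Gaussian \eqref{eq:Gaussian}, $\widehat{K}_{\sqrt{\delta t}} \geq 0$; since the real and imaginary parts of $(\widehat{v_1}(\xi), \ldots, \widehat{v_N}(\xi))$ both sum to zero (and the imaginary cross terms cancel by symmetry of $\sigma$), conditional negative semi-definiteness of $\sigma$ makes the inner quadratic form nonpositive at every $\xi$. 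So the second variation is nonpositive and $E_{\sqrt{\delta t}}$ is concave on $\mathcal{K}$.

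Concavity immediately yields
\[
E_{\sqrt{\delta t}}(\bm{u}^{k+1}) \leq E_{\sqrt{\delta t}}(\bm{u}^k) + \big\langle \nabla E_{\sqrt{\delta t}}(\bm{u}^k), \bm{u}^{k+1} - \bm{u}^k \big\rangle.
\]
A short computation using symmetry of $\sigma$ and evenness of $K$ shows that $(\nabla E_{\sqrt{\delta t}}(\bm{u}^k))_i$ is, up to a positive factor, exactly $\phi_i^k = K_{\sqrt{\delta t}} \ast \sum_j \sigma_{i,j} \bm{1}_{\Sigma_j^k}$. Minimizing the linear functional $\sum_i \int_D \phi_i^k u_i \dx$ over $\mathcal{K}$ decouples into a pointwise linear program whose solution is $u_i(x) = 1$ on the index realizing $\min_j \phi_j^k(x)$ and zero otherwise; this is precisely the thresholding step of Algorithm \ref{alg:EO}. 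Since $\bm{u}^k$ is itself a feasible competitor, $\langle \nabla E_{\sqrt{\delta t}}(\bm{u}^k), \bm{u}^{k+1} - \bm{u}^k \rangle \leq 0$, hence $E_{\sqrt{\delta t}}(\bm{u}^{k+1}) \leq E_{\sqrt{\delta t}}(\bm{u}^k)$. Because the iterates are partitions, the relaxed and non-relaxed energies agree along the trajectory, yielding dissipation of both \eqref{eq:lyapunov_multiphase_EO} and \eqref{eq:lyapunov_multiphase_EO_relaxation}.

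For Algorithm \ref{alg:EO_retardation} the same template applies, with the Lyapunov functional augmented to accommodate retardation. The two checks to carry out are (i) that the augmentation preserves concavity on $\mathcal{K}$, which should follow as long as the retardation kernels also have nonnegative Fourier transforms, and (ii) that the thresholding rule in Algorithm \ref{alg:EO_retardation} is the pointwise minimizer of the correspondingly modified linear functional. I expect this second identification---matching the precise thresholding rule in the general-mobility algorithm to the gradient of the augmented Lyapunov---to be the main technical obstacle; once it is established cleanly, concavity again delivers dissipation by the same one-line argument.
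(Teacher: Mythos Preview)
The paper does not give its own proof of this proposition: it is stated as a quotation from \cite{SelimFelix}, with no accompanying argument. So there is no in-paper proof to compare against directly.

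That said, your proposal is correct and is exactly the concavity-plus-linearization mechanism of \cite{SelimFelix} that the present paper relies on elsewhere. The paper's proof of Proposition~\ref{prop:stability} explicitly invokes ``the proof of Proposition~5.3 in \cite{SelimFelix}'' and reduces the question to showing that the relevant coefficient matrices are conditionally negative semi-definite---precisely the hypothesis that, in your argument, feeds into the Plancherel computation to give concavity of $E_{\sqrt{\delta t}}$ on $\mathcal{K}$. Your identification of the thresholding step with the pointwise minimizer of the linearized functional, and the one-line dissipation via the tangent inequality, match the structure of that reference. For Algorithm~\ref{alg:EO_retardation} your outline is also on target: the retardation terms enter as an additional nonnegative penalty that does not spoil concavity, and the modified thresholding is again the minimizer of the augmented linearization; the details are in \cite{SelimFelix} and are not reproduced here.
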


\boxalgorithm{\caption{(in \cite{SelimFelix})}\label{alg:EO_retardation}}{Given the initial partition $\Sigma_1^0, \ldots, \Sigma_N^0$ with $\Sigma_i^0 = \left\{x:\psi_i^0(x) > 0\right\}$, to obtain the partition $\Sigma_1^{k+1}, \ldots, \Sigma_N^{k+1}$ at time step $t = (\delta t)(k+1)$ from the partition $\Sigma_1^k, \ldots, \Sigma_N^k$ at time $t = (\delta t)k$:
\begin{enumerate}
	\item Form the convolutions:
\[
\phi_i^k = K_{\sqrt{\delta t}} \ast \left(\sum_{j=1}^N \sigma_{i,j} \bm{1}_{\Sigma_j^K}\right) \quad i = 1,\ldots,N,
\]
where $K$ is the Gaussian \eqref{eq:Gaussian}.
	\item Form the retardation functions:
\[
R_i^k = \max_{l\neq i} \frac{\sqrt{\delta t}}{2}(1-\mu_{i,j}\sigma_{i,j}) \psi_j^k.
\]
	\item Form the comparison functions
\[
\psi_i^{k+1} = \left(\min_{l\neq i} \phi_l^k + \frac{1}{\sqrt{\delta t}} R_l^k\right) - \phi_i^k + \frac{1}{\sqrt{\delta t}} R_i^k.
\]
	\item Threshold the comparison functions $\psi_i^{k+1}$:
\[
\Sigma_i^{k+1} = \left\{x: \psi_i^k(x) > 0\right\}.
\]
\end{enumerate}
}

Unconditional gradient stability is a desirable property in threshold dynamic algorithms.
Given their minimizing movements formulation, requiring the $\Gamma$-convergence of the associated energies \eqref{eq:lyapunov_multiphase_EO_relaxation} also appears to be of fundamental importance.
We will highlight this point in \autoref{sec:nonconvergence} where we exhibit a counter example. We recall the result from \cite{SelimFelix} that establishes the $\Gamma$-convergence of Algorithm \ref{alg:EO}. In \cite{SelimFelix} the result is presented for general positive convolution kernels but here we focus on the case where $K$ is taken to be $G$. Let
\[
BV_\mathbb{B} = \left\{\bm{u} \in \mathcal{K}: u_i(x) \in \{0,1\} \text{ and } u_i \in BV(D) \text{ for } i \in \{1,2,\ldots,N\}\right\},
\]
and for $\bm{u}\in\mathcal{K}$ define the energy
\bq\label{eq:energy_multiphase_EO_relaxation}
E(\bm{u},\sigma) = \begin{cases}
\displaystyle \frac{1}{\sqrt{\pi}}\sum_{(i,j)\in\mathcal{I}_N} \sigma_{i,j} \int_D \nabla u_i + \nabla u_j - \nabla (u_i + u_j)	& \text{if } \bm{u} \in BV_\mathbb{B},\\
+\infty				& \text{otherwise},
\end{cases}
\eq
which is the formulation of multiphase energy \eqref{eq:energy_multiphase_isotropic} in the setting of functions of bounded variation.

\begin{theorem}[from \cite{SelimFelix}]\label{thm:Gamma_convergence_EO}
Assume that $\sigma \in \mathcal{T}_N$. Then, as $\epsilon\to 0$, the Lyapunov functionals $E_\epsilon(\cdot,G_\epsilon)$ given in \eqref{eq:lyapunov_multiphase_EO_relaxation} $\Gamma$-converge in the $L^1$ topology over $\mathcal{K}$ to the energy $E(\cdot,\sigma)$ given in \eqref{eq:energy_multiphase_EO_relaxation}. Furthermore, if for some sequence $\bm{u}_\epsilon$ we have $\sup_{\epsilon>0}E_\epsilon(\bm{u}_\epsilon,G_\epsilon) < \infty$, then $\bm{u}_\epsilon$ is precompact in $L^1(D)$ and the set of accumulation points is contained in $BV_\mathbb{B}(D)$.
\end{theorem}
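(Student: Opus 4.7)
The plan is to reduce the multiphase $\Gamma$-convergence to the two-phase case (Alberti--Bellettini, Miranda) via a direct algebraic decomposition, and to use the triangle inequality on $\sigma$ together with the box constraint to control the combinatorics.

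\textbf{A key identity.} For $u,v:D\to[0,1]$ I would first verify that
\[
\frac{2}{\epsilon}\int_D u\,G_\epsilon * v\,dx \;=\; J_\epsilon(u) + J_\epsilon(v) - J_\epsilon(u+v),
\qquad J_\epsilon(w) := \tfrac{1}{\epsilon}\int_D (1-w)\,G_\epsilon * w\,dx,
\]
where $J_\epsilon$ is the classical two-phase nonlocal perimeter functional. The identity follows from $\int G_\epsilon=1$ and the quadratic expansion of $\int(u+v)G_\epsilon*(u+v)$. Applied pairwise, it rewrites the relaxed multiphase energy as
\[
E_\epsilon(\bm u,G_\epsilon) \;=\; \sum_{i<j}\sigma_{i,j}\bigl[J_\epsilon(u_i) + J_\epsilon(u_j) - J_\epsilon(u_i+u_j)\bigr].
\]

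\textbf{Compactness.} Summing the identity over all pairs and using $\sum_i u_i\equiv 1$ gives
\[
\sum_i J_\epsilon(u_i) \;=\; \sum_{i\ne j}\frac{1}{\epsilon}\int u_i\,G_\epsilon * u_j\,dx \;\le\; \frac{1}{c}\,E_\epsilon(\bm u,G_\epsilon),
\]
with $c:=\min_{i\ne j}\sigma_{i,j}>0$. If $\sup_\epsilon E_\epsilon(\bm u_\epsilon,G_\epsilon)<\infty$, I then invoke the two-phase Miranda--Alberti-Bellettini compactness on each slot: along a subsequence, $u_{\epsilon,i}\to u_i$ in $L^1$ with $u_i \in BV(D;\{0,1\})$. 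The box constraint passes to the limit, so $\bm u\in BV_\mathbb{B}$.

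\textbf{Limsup and liminf.} For $\bm u\in BV_\mathbb{B}$, the constant sequence $\bm u_\epsilon\equiv\bm u$ serves as the recovery candidate: since $u_i$, $u_j$, and $u_i+u_j$ are each indicators of finite-perimeter sets, Miranda's pointwise convergence for the Gaussian kernel yields $J_\epsilon(u_i)\to\tfrac{1}{\sqrt\pi}P(\{u_i=1\})$ and similarly for the other slots; summing via the key identity produces $E_\epsilon(\bm u,G_\epsilon)\to E(\bm u,\sigma)$. For the liminf, given $\bm u_\epsilon\to\bm u$ in $L^1$ with bounded energy, compactness forces $\bm u\in BV_\mathbb{B}$, and I process each nonnegative pairwise interaction $\frac{2}{\epsilon}\int u_{\epsilon,i}G_\epsilon * u_{\epsilon,j}$ through the Fourier representation
\[
\frac{2}{\epsilon}\int u_{\epsilon,i}\,G_\epsilon * u_{\epsilon,j}\,dx \;=\; \frac{2}{\epsilon}\sum_k e^{-\epsilon^2|k|^2}\,\widehat{u}_{\epsilon,i}(k)\,\overline{\widehat{u}_{\epsilon,j}(k)},
\]
exploiting the positivity of $\widehat{G}_\epsilon$ to extract $\liminf_\epsilon \tfrac{2}{\epsilon}\int u_{\epsilon,i}G_\epsilon * u_{\epsilon,j}\ge \tfrac{2}{\sqrt\pi}H^{d-1}(\Gamma_{i,j})$. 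Positivity of $\sigma_{i,j}$ then delivers $\liminf E_\epsilon\ge E(\bm u,\sigma)$.

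\textbf{Main obstacle.} The crux is the liminf in the multiphase setting. The subtracted $-J_\epsilon(u_{\epsilon,i}+u_{\epsilon,j})$ in the decomposition resists a naive application of the two-phase $\Gamma$-liminf, which only delivers \emph{lower} bounds on $J_\epsilon$. One must either exploit the positivity of $\widehat{G}_\epsilon$ to handle the pairwise quadratic form directly, or approximate the partition by smoother ones and leverage Miranda's pointwise convergence together with a diagonal argument. The triangle inequality on $\sigma$ is precisely what makes the target functional $E(\bm u,\sigma)$ expressible as a nonnegative combination of such pairwise lower bounds, reconciling the algebraic decomposition with the analytic $\Gamma$-liminf.
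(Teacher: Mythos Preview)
The paper does not actually prove this theorem; it is quoted from \cite{SelimFelix} without argument. So there is no in-paper proof to compare against, and I will assess your proposal on its own terms.

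Your compactness and limsup steps are fine: the algebraic identity is correct, the bound $\sum_i J_\epsilon(u_{\epsilon,i}) \le c^{-1} E_\epsilon(\bm u_\epsilon,G_\epsilon)$ is the right reduction to the two-phase compactness, and the constant recovery sequence works for the limsup because all the slots $u_i$, $u_j$, $u_i+u_j$ are genuine characteristic functions of finite-perimeter sets.

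The liminf, however, contains a real gap. You propose to bound each cross term individually,
\[
\liminf_{\epsilon\to 0}\;\frac{2}{\epsilon}\int_D u_{\epsilon,i}\,G_\epsilon * u_{\epsilon,j}\,dx \;\ge\; \frac{2}{\sqrt{\pi}}\,H^{d-1}(\Gamma_{i,j}),
\]
and then sum with weights $\sigma_{i,j}>0$. This pairwise lower bound is \emph{false}. The Fourier positivity of $\widehat G_\epsilon$ does not help, since $\widehat{u}_{\epsilon,i}(k)\overline{\widehat{u}_{\epsilon,j}(k)}$ has no sign. More concretely, take $\bm u_\epsilon\to\bm u$ with $\Gamma_{i,j}\neq\emptyset$, but insert a layer of a third phase $k$ of width $\sim\epsilon$ between $\Sigma_i$ and $\Sigma_j$ (exactly the wetting construction of Section~\ref{sec:nonconvergence} in this paper). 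Then the supports of $u_{\epsilon,i}$ and $u_{\epsilon,j}$ are separated by a gap comparable to the kernel width, and $\tfrac{1}{\epsilon}\int u_{\epsilon,i}\,G_\epsilon * u_{\epsilon,j}$ can be made arbitrarily small while still $u_{\epsilon,i}\to u_i$, $u_{\epsilon,j}\to u_j$ in $L^1$. So the individual cross terms are not $\Gamma$-lower semicontinuous; only the full weighted sum is, and only because the triangle inequality on $\sigma$ makes wetting energetically unfavorable \emph{in the aggregate}.

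The actual mechanism in \cite{SelimFelix} (alluded to in the proof of Theorem~\ref{thm:RSmodel} via ``Proposition A.1'') is different: the liminf is obtained by a localization/blow-up argument in which the \emph{pointwise} triangle inequality on the kernels $\sigma_{i,j}G_\epsilon$ is used to show that any multiphase transition across an interface costs at least as much as the direct two-phase transition. Your closing remark that the triangle inequality ``makes the target expressible as a nonnegative combination of pairwise lower bounds'' is in the right spirit but is not the argument; the triangle inequality enters at the level of the approximating functionals (to rule out cheap wetted competitors), not merely at the level of the limit.
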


Beyond the convergence of energies, recent work of Laux and Otto \cite{LauxOttoConvergenveMultiphase,LauxOttoBrakke} established conditional convergence of dynamics generated by Algorithm 2 to e.g. the multiphase version of a suitable weak formulation of mean curvature motion from \cite{LuckhausImplicitTime}.

When kernels more general than the Gaussian are used, the minimizing movements formulation \eqref{eq:minimizing_movement_twophase} allows to easily identify the corresponding possibly normal dependent surface tension and a normal dependent mobility factor associated with it.
In \cite{SelimElseyTD}, the following expressions for the surface tension $\sigma_K$ and mobility $\mu_K$ associated with a given kernel $K$ in terms of the Fourier transform $\widehat{K}$ are provided:
\bq\label{eq:identities_sigma_mu}
\sigma_K(n) = -\frac{1}{2\pi}\int_\R \frac{\widehat{K}(n\xi)-\widehat{K}(0)}{\xi^2}\:d\xi \quad \text{and} \quad \mu_K(n) = \pi\left(\int_\R \widehat{K}(n\xi)\:d\xi\right)^{-1}.
\eq
Here, we use the following definition of the Fourier transform on $\R^d$:
\[
\widehat{f}(\xi) = \int_{\Rd} f(x) e^{-ix\cdot\xi}\:d\xi \text{ so that } f(x) = \frac{1}{(2\pi)^d} \int_{\Rd} \widehat{f}(\xi)e^{i\xi\cdot x}\:d\xi,
\]
e.g. $f$ in Schwartz class. In addition, it is worth recalling the following fact from \cite{SelimElseyTD}.
\begin{proposition}[from \cite{SelimElseyTD}]
Let $\Sigma$ be a compact subset of $\Rd$ with smooth boundary. Let $K:\Rd \to \R$ be a kernel satisfying \eqref{eq:kernel_standard_assumptions}. Then
\[
\lim_{\delta t \to 0^+} E_{\sqrt{\delta t}}(\Sigma,K_{\sqrt{\delta t}}) = \int_{\partial \Sigma} \sigma_K(n(x)) \:dH^{d-1}(x)
\]
where the surface tension $\sigma_K:\Rd\to\R^+$ is defined as
\[
\sigma_K(n) = \frac{1}{2} \int_{\Rd} \abs{n \cdot x} K(x) \dx.
\]
\end{proposition}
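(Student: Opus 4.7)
The plan is to reduce the problem to a one-dimensional (along each direction) computation by Fubini, then recognize the resulting inner integral as a measure of how much of $\Sigma$ is "shifted out" by a small translation, which for smooth $\partial\Sigma$ can be controlled by the positive part of the inner product of the translation direction with the outward normal.

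First I would rewrite the functional. Unfolding the definition and changing variables $z = (x-y)/\epsilon$ with $\epsilon = \sqrt{\delta t}$, I would get
\[
E_\epsilon(\Sigma,K_\epsilon) = \int_{\R^d} K(z) \left(\frac{1}{\epsilon} \int_{\R^d} \bm{1}_\Sigma(y)\bm{1}_{\Sigma^c}(y+\epsilon z)\:dy\right) dz = \int_{\R^d} K(z)\, \frac{|\Sigma \setminus (\Sigma-\epsilon z)|}{\epsilon}\:dz.
\]
This single identity is the heart of the argument: it separates the kernel-dependent weighting from a purely geometric quantity measuring how much $\Sigma$ is displaced in the direction $z$.

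Next I would analyze the inner geometric factor for smooth compact $\Sigma$. The claim is that for each $z\in\R^d$,
\[
\lim_{\epsilon\to 0^+}\frac{|\Sigma \setminus (\Sigma-\epsilon z)|}{\epsilon} = \int_{\partial \Sigma} \left(z\cdot n(x)\right)^+ dH^{d-1}(x),
\]
where $(a)^+ = \max(a,0)$ and $n$ is the outward unit normal. I would justify this by using a tubular neighborhood of $\partial\Sigma$: on a collar of width $O(\epsilon)$ the symmetric difference between $\Sigma$ and its translate $\Sigma - \epsilon z$ is asymptotically foliated by segments of length $\epsilon (z\cdot n(x))^+$ over each boundary point $x$, and the coarea formula (or equivalently a direct parameterization of the collar by $\partial\Sigma \times (-\delta,\delta)$) turns the volume integral into the claimed surface integral. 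Smoothness of $\partial\Sigma$ is exactly what makes the tubular neighborhood parameterization valid with bounded Jacobian for all sufficiently small $\epsilon$.

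Then I would pass to the limit under the outer integral by dominated convergence. The dominating bound I need is
\[
\frac{|\Sigma \setminus (\Sigma-\epsilon z)|}{\epsilon} \leq C\, |z|
\]
for all sufficiently small $\epsilon$, uniformly in $z$, where $C$ depends only on $\diam(\Sigma)$ and $H^{d-1}(\partial\Sigma)$. Again this follows from the tubular neighborhood estimate together with the fact that the symmetric difference is contained in an $\epsilon |z|$-neighborhood of $\partial\Sigma$. The integrability of $|z|K(z)$ is provided by the standing assumption \eqref{eq:kernel_standard_assumptions}, so dominated convergence applies and yields
\[
\lim_{\epsilon\to 0^+} E_\epsilon(\Sigma,K_\epsilon) = \int_{\R^d} K(z)\int_{\partial\Sigma}(z\cdot n(x))^+ dH^{d-1}(x)\:dz.
\]
Finally, swapping the order of integration via Fubini and using that $K$ is even (so that $\int K(z)(z\cdot n)^+ dz = \int K(z)(z\cdot n)^- dz = \frac{1}{2}\int K(z)|z\cdot n|\:dz$) gives exactly $\int_{\partial\Sigma}\sigma_K(n(x))\:dH^{d-1}(x)$, as claimed.

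The main obstacle I expect is the geometric step: establishing the pointwise limit together with the $C|z|$ bound uniformly on a small-$\epsilon$ regime. The key technical input is the existence of a tubular neighborhood of $\partial\Sigma$ (guaranteed by compactness and smoothness), which lets one parametrize points near the boundary via the signed distance and a projection onto $\partial\Sigma$, and which makes the Jacobian factors in the change of variables uniformly bounded. Once that geometric lemma is in place, the rest is a direct application of Fubini, dominated convergence, and the evenness of $K$.
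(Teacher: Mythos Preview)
The paper does not give its own proof of this proposition; it is simply quoted from \cite{SelimElseyTD} as background. Your argument is correct and is the standard one: rewrite $E_\epsilon$ via Fubini as $\int K(z)\,\epsilon^{-1}|\Sigma\setminus(\Sigma-\epsilon z)|\,dz$, identify the pointwise limit of the inner geometric quantity as $\int_{\partial\Sigma}(z\cdot n)^+\,dH^{d-1}$ using a tubular neighborhood, apply dominated convergence using $|z|K(z)\in L^1$, and finally use the evenness of $K$ to replace $(z\cdot n)^+$ by $\tfrac{1}{2}|z\cdot n|$.

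One small tightening: the uniform bound $\epsilon^{-1}|\Sigma\setminus(\Sigma-\epsilon z)|\leq C|z|$ is actually immediate from the elementary BV estimate $|\Sigma\triangle(\Sigma-h)|\leq |h|\,H^{d-1}(\partial\Sigma)$, valid for \emph{every} $h\in\R^d$ when $\Sigma$ has finite perimeter. This sidesteps the need to worry about whether $\epsilon|z|$ is small relative to the tubular neighborhood width, which your justification implicitly assumes.
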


Formulas \eqref{eq:identities_sigma_mu} that express the surface tension and mobility of a kernel have been previously used in \cite{SelimMattZahng} to design convolution kernels for a given desired pair of {\em anisotropic} surface tension and mobility. In that setting, the analogue of the Lyapunov functional \eqref{eq:lyapunov_multiphase_EO}, i.e., the non-local approximation to \eqref{eq:energy_multiphase_anisotropic}, is given by
\bq\label{eq:lyapunov_multiphase_EE}
E_{\sqrt{\delta t}}(\bm{\Sigma},\bm{K}_{\sqrt{\delta t}}) = \frac{1}{\sqrt{\delta t}} \sum_{(i,j)\in\mathcal{I}_N} \int_{\Sigma_j} \left(K_{i,j}\right)_{\sqrt{\delta t}} \ast \bm{1}_{\Sigma_i} \dx
\eq
where each component $K_{i,j}$ of the collection of kernels $\bm{K}$ satisfies
\bq
\label{eq:identities_sigma_mu_classic}
\frac{1}{2} \int_{\Rd} \abs{n\cdot x} K_{i,j}(x) \dx = \sigma_{i,j}(n)  \mbox{ and } \int_{n^\perp} K_{i,j}(x) \, dH^{d-1}(x) = \mu_{i,j}^{-1}(n),
\eq
while its relaxation given by
\bq\label{eq:lyapunov_multiphase_EE_relaxation}
E_{\sqrt{\delta t}}(\bm{u},\bm{K}_{\sqrt{\delta t}}) = \frac{1}{\sqrt{\delta t}} \sum_{(i,j)\in\mathcal{I}_N} \int_{\Rd} u_j \left(K_{i,j}\right)_{\sqrt{\delta t}} \ast u_i \dx.
\eq

In this paper, we use the same formulas \eqref{eq:identities_sigma_mu} in the {\em isotropic} setting to give a simpler version of Algorithm \ref{alg:EO_retardation} that does not require retardation terms, and which therefore stays truer to the spirit of the original MBO Algorithm \ref{alg:MBO}.
More specifically, the contributions of the paper can be summarized as follows:

\begin{enumerate}
	\item Given $N\choose 2$ surface tension and mobility pairs $(\sigma_{i,j} , \mu_{i,j})$, we construct $N\choose 2$ corresponding convolution kernels of the form
	$$ K_{i,j}(x) = a_{i,j} G_{\sqrt{\alpha}} + b_{i,j} G_{\sqrt{\beta}} $$
	where $a_{i,j} ,b_{i,j} >0$ and $\alpha \neq \beta$ are chosen to bake the desired $\sigma_{i,j}$ and $\mu_{i,j}$ into $K_{i,j}$.
	An essential novelty is that all required convolutions in the resulting threshold dynamics scheme can be obtained from convolutions with just two types of kernels, $G_{\sqrt{\alpha}}$ and $G_{\sqrt{\beta}}$, which makes for a particularly practical and efficient algorithm.
	\item We discuss conditions on $(\sigma_{i,j},\mu_{i,j})$, as well as $\alpha$, $\beta$, that ensure $\Gamma$-convergence of the corresponding non-local multiphase energy \eqref{eq:lyapunov_multiphase_EE_relaxation} to the corresponding sharp interface limit \eqref{eq:energy_multiphase_EO_relaxation}, and the unconditional gradient stability of the resulting thresholding scheme.
	It turns out that the permissible $(\sigma_{i,j},\mu_{i,j})$ pairs include {\em Read-Shockley surface tensions} \cite{ReadShockley} and {\em equal mobilities} $\mu_{i,j}=1$ which, unlike the unusual mobilities $\mu_{i,j} = \sigma_{i,j}^{-1}$ of Algorithm 2, are a very common choice in materials science literature.
	\item Although the convolution kernels in our construction are all positive and radially symmetric so that $\Gamma$-convergence of the two-phase energy (\ref{eq:lyapunov_twophase}) follows from prior work \cite{SelimFelix}, and even the older viscosity solutions approach of \cite{IshiiTDPropagatingFronts} applies and implies convergence of any of the two-phase flows, we exhibit choices of conditionally negative semi-definite $\sigma\in \mathcal{T}_N$ and $\mu\geq 0$ that fall outside our conditions for which the algorithm fails with the proposed kernel construction:
	the dynamics generated appears to converge to an unexpected limit.
	A short calculation shows that it is in fact $\Gamma$-convergence of the non-local multiphase energy \eqref{eq:lyapunov_multiphase_EE_relaxation} to the advertised limit \eqref{eq:energy_multiphase_EO_relaxation} that fails, and in this case, leads to failure of convergence of the dynamics as well.
\end{enumerate}

\section{The new algorithm}\label{sec:algorithm}

In this section, we derive the new, simplified version of Algorithm \ref{alg:EO_retardation} that dispenses with the retardation terms and still achieves a wide variety of mobilities, including the very important case of constant mobilities.
We then discuss its unconditional gradient stability, and the $\Gamma$-convergence of its associated non-local energies.
\subsection{Construction of the convolution kernels}

We begin with the simplest setting of two-phases: Given a target surface tension $\sigma_*$ and mobility $\mu_*$, we look for a kernel of the form
\[
K = a G_{\sqrt{\alpha}} + b G_{\sqrt{\beta}},
\]
where $\alpha > \beta > 0$ are fixed, and $G_{\sqrt{\alpha}}$ denotes the the Gaussian kernel given by
\[
G_{\sqrt{\alpha}}(x) = \frac{1}{4\pi\alpha}e^{-\frac{\abs{x}^2}{4\alpha}},
\]
and so $\widehat{G_{\sqrt{\alpha}}}(x) = e^{-\alpha\abs{x}^2}$.
Our goal is to choose $a,b>0$ so that $K$ has the desired $\sigma_*$ and $\mu_*$ as its surface tension and mobility via formulas \eqref{eq:identities_sigma_mu}.
Moreover, we would like to ensure $K>0$ -- a crucial property for the viscosity solutions approach \cite{IshiiTDPropagatingFronts} for two-phase flow, and very convenient for the variational formulation of \cite{SelimFelix} in establishing $\Gamma$-convergence of the corresponding energies.

We focus on the two-dimensional setting for convenience; the statements, formulas, and algorithms below adapt easily to arbitrary dimensions.

\begin{proposition}\label{prop:Kernel_construction}
Let $\alpha > \beta > 0$. Given $\sigma_*,\mu_* \in \R^+$, the convolution kernel $K = a G_{\sqrt{\alpha}} + b G_{\sqrt{\beta}}$, with
\[a = \frac{\sqrt{\pi}\sqrt{\alpha}}{\alpha-\beta}\left(\sigma_*-\beta\mu_*^{-1}\right) \quad \text{and} \quad b = \frac{\sqrt{\pi}\sqrt{\beta}}{\alpha-\beta}\left(-\sigma_*+\alpha\mu_*^{-1}\right)
\]
is such that $\sigma_K = \sigma_*$ and mobility $\mu_K = \mu_*$. Moreover, $K$ is positive if
\[
\alpha \geq \sigma_*\mu_* \quad \text{and} \quad \beta \leq \sigma_*\mu_*.
\]
\end{proposition}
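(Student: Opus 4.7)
The plan is to plug the ansatz $K = aG_{\sqrt{\alpha}} + bG_{\sqrt{\beta}}$ into the two identities \eqref{eq:identities_sigma_mu}, turn the matching conditions $\sigma_K=\sigma_*$ and $\mu_K=\mu_*$ into a $2\times 2$ linear system in $(a,b)$, and solve it explicitly. Positivity of $K$ is then read off directly from the signs of $a$ and $b$.

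First I would compute the Fourier transform. Since $\widehat{G_{\sqrt{\alpha}}}(\xi) = e^{-\alpha|\xi|^2}$, linearity gives $\widehat{K}(\xi) = ae^{-\alpha|\xi|^2} + be^{-\beta|\xi|^2}$ and in particular $\widehat{K}(0)=a+b$. Because $\widehat{K}$ depends only on $|\xi|$, both $\sigma_K$ and $\mu_K$ end up normal-independent, as one expects in the isotropic setting. The two integrals reduce, after a change of variables $\eta = \sqrt{\alpha}\,\xi$ (resp.\ $\sqrt{\beta}\,\xi$), to the two universal quantities
\[
I_1 := \int_\R \frac{1 - e^{-\eta^2}}{\eta^2}\,d\eta = 2\sqrt{\pi}, \qquad I_2 := \int_\R e^{-\eta^2}\,d\eta = \sqrt{\pi},
\]
so that $\int_\R \frac{1-e^{-\alpha\xi^2}}{\xi^2}\,d\xi = 2\sqrt{\pi\alpha}$ and $\int_\R e^{-\alpha\xi^2}\,d\xi = \sqrt{\pi/\alpha}$, with analogous identities for $\beta$.

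Substituting into \eqref{eq:identities_sigma_mu} then yields
\[
\sigma_K = \frac{a\sqrt{\alpha} + b\sqrt{\beta}}{\sqrt{\pi}}, \qquad \mu_K^{-1} = \frac{1}{\sqrt{\pi}}\left(\frac{a}{\sqrt{\alpha}} + \frac{b}{\sqrt{\beta}}\right).
\]
Setting $\sigma_K=\sigma_*$ and $\mu_K^{-1} = \mu_*^{-1}$ gives the linear system
\[
a\sqrt{\alpha} + b\sqrt{\beta} = \sqrt{\pi}\,\sigma_*, \qquad \frac{a}{\sqrt{\alpha}} + \frac{b}{\sqrt{\beta}} = \sqrt{\pi}\,\mu_*^{-1},
\]
whose determinant is $\sqrt{\beta/\alpha} - \sqrt{\alpha/\beta} = -(\alpha-\beta)/\sqrt{\alpha\beta}$, nonzero since $\alpha\neq\beta$. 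Cramer's rule (or a direct elimination of $b$ from the two equations) produces precisely the formulas for $a$ and $b$ stated in the proposition.

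Finally, for positivity, since $G_{\sqrt{\alpha}}$ and $G_{\sqrt{\beta}}$ are pointwise positive, $K>0$ as soon as $a,b\geq 0$ with at least one strict. The factor $\sqrt{\pi\alpha}/(\alpha-\beta)$ in the expression for $a$ is positive since $\alpha>\beta>0$, so $a\geq 0$ is equivalent to $\sigma_* - \beta\mu_*^{-1}\geq 0$, i.e.\ $\beta\leq \sigma_*\mu_*$; similarly $b\geq 0$ is equivalent to $\alpha\geq \sigma_*\mu_*$. Not both can be equalities because $a\sqrt{\alpha}+b\sqrt{\beta} = \sqrt{\pi}\sigma_*>0$ rules out $a=b=0$, so $K>0$ pointwise. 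There is no real obstacle here: the only mildly delicate point is justifying the evaluation of the (improper) integral $I_1$, which is standard via either a parameter-differentiation trick $\frac{d}{d\alpha}\int (1-e^{-\alpha\xi^2})/\xi^2\,d\xi = \int e^{-\alpha\xi^2}\,d\xi$ together with the value at $\alpha=0$, or direct integration by parts.
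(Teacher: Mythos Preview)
Your argument is correct and follows essentially the same route as the paper: both compute $\sigma_{G_{\sqrt{\alpha}}}=\sqrt{\alpha}/\sqrt{\pi}$ and $\mu_{G_{\sqrt{\alpha}}}^{-1}=1/(\sqrt{\pi}\sqrt{\alpha})$ from \eqref{eq:identities_sigma_mu}, use linearity to set up the same $2\times 2$ system in $(a,b)$, solve it, and read off positivity from the signs of $a$ and $b$. The only difference is cosmetic: you spell out the Gaussian integral evaluations (the $I_1$, $I_2$ computation) where the paper simply quotes the result, and you add the small observation that $a=b=0$ is impossible so that $K>0$ strictly.
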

\begin{proof}
We start by observing that for any $n \in \mathcal{S}^1$
\[
\sigma_{G_{\sqrt{\alpha}}}(n) = \frac{\sqrt{\alpha}}{\sqrt{\pi}} \quad \text{and} \quad \mu_{G_{\sqrt{\alpha}}}(n) = \sqrt{\pi}\sqrt{\alpha}.
\]
This follows from the formulas \eqref{eq:identities_sigma_mu} which also allows us to deduce that
\[
\begin{cases}
\sigma_{K} & = a \sigma_{G_{\sqrt{\alpha}}} + b_* \sigma_{G_{\sqrt{\beta}}},\\
\mu_{K}^{-1} & = a \mu_{G_{\sqrt{\alpha}}}^{-1} + b \mu_{G_{\sqrt{\beta}}}^{-1}.
\end{cases}
\]
Thus $(a,b)$ is the solution of the linear system
\[
\begin{cases}
\sigma_*	& = a \frac{\sqrt{\alpha}}{\sqrt{\pi}} + b \frac{\sqrt{\beta}}{\sqrt{\pi}},\\
\mu_*^{-1}	& = a \frac{1}{\sqrt{\pi}\sqrt{\alpha}} + b \frac{1}{\sqrt{\pi}\sqrt{\beta}},
\end{cases}
\]
and is given by
\[
\begin{cases}
a = \frac{\sqrt{\pi}\sqrt{\alpha}}{\alpha-\beta}\left(\sigma_*-\beta\mu_*^{-1}\right),\\
b = \frac{\sqrt{\pi}\sqrt{\beta}}{\alpha-\beta}\left(-\sigma_*+\alpha\mu_*^{-1}\right).
\end{cases}
\]
The kernel $K$ is positive if $a,b \geq 0$. Since $\alpha>\beta$, we need
\[
\sigma_*-\beta\mu_*^{-1} \geq 0 \Longleftrightarrow \beta \leq \sigma_*\mu_*
\quad
\text{and}
\quad
-\sigma_*+\alpha\mu_*^{-1} \geq 0 \Longleftrightarrow \alpha \geq \sigma_*\mu_*
\]
as desired.
\end{proof}

Of course, in the two-phase setting, the evolution generated by each such radially symmetric kernel is simply a constant multiple of mean curvature motion; only the product $\sigma_*\mu_*$ matters.
The individual values of $\sigma_*$ and $\mu_*$ become relevant in the multiphase setting, where surface tensions determine the junction angle conditions \eqref{angles_relationship}.
We thus now turn to the setting of $N$ phases, where
for each interface $\Gamma_{i,j}$ in the network, we are given a prescribed surface tension $\sigma_{i,j} \in \R^+$ and mobility $\mu_{i,j} \in \R^+$.
Using the construction of Proposition \ref{prop:Kernel_construction}, we define the kernel associated with interface $\Gamma_{i,j}$ as $K_{i,j} = a_{i,j} G_{\sqrt{\alpha}} + b_{i,j} G_{\sqrt{\beta}}$ with
\bq\label{eq:coefficients_kernels}
a_{i,j} = \frac{\sqrt{\pi}\sqrt{\alpha}}{\alpha-\beta}\left(\sigma_{i,j}-\beta\mu_{i,j}^{-1}\right) \quad \text{and} \quad b_{i,j} = \frac{\sqrt{\pi}\sqrt{\beta}}{\alpha-\beta}\left(-\sigma_{i,j}+\alpha\mu_{i,j}^{-1}\right).
\eq
Then, all the kernels $K_{i,j}$ are positive provided that
\bq\label{eq:choice_alpha_beta_positive}
\alpha \geq \max \sigma_{i,j}\mu_{i,j} \quad \text{and} \quad \beta \leq \min \sigma_{i,j}\mu_{i,j}.
\eq
This leads to the following natural algorithm to simulate the dynamics \eqref{normalspeed} under the constraint \eqref{angles_relationship}.

\boxalgorithm{\caption{}\label{alg:ES}}{Given the initial partial $\Sigma_1^0, \ldots, \Sigma_N^0$ with $\Sigma_i^0 = \left\{x:u_i^0(x) > 0\right\}$, to obtain the partition $\Sigma_1^{k+1}, \ldots, \Sigma_N^{k+1}$ at time step $t = (\delta t)(k+1)$ from the partition $\Sigma_1^k, \ldots, \Sigma_N^k$ at time $t = (\delta t)k$:
\begin{enumerate}
	\item \label{alg:ES_convolution_step}Form the convolutions:
\[
\phi^k_{1,i} = G_{\sqrt{\alpha\delta t}} * \mathbf{1}_{\Sigma_i^k} \mbox{ and } 
\phi^k_{2,i} = G_{\sqrt{\beta\delta t}} * \mathbf{1}_{\Sigma_i^k}.
\]
	\item \label{alg:ES_comparison}Form the comparison functions:
\begin{equation*}
\psi^k_i = \sum_{j \not= i} a_{i,j} \phi_{1,j}^k + b_{i,j} \phi_{2,j}^k
\end{equation*}
where $a_{i,j}$ and $b_{i,j}$ are given by \eqref{eq:coefficients_kernels}.
	\item \label{alg:ES_thresholding_step}Threshold the comparison functions:
\[
\Sigma_i^{k+1} = \left\{x: \psi_i^k(x) < \min_{j\neq i} \psi_j^k(x)\right\}.
\]
\end{enumerate}
}

\begin{remark}
The $2N$ convolutions required by Step (1) of the algorithm above can be quickly obtained using the acceleration utilized in \cite{ees} that groups distinct, well separated phases into families.
This way, even with hundreds of thousands of phases, the number of convolutions required per time step can be kept very low, depending not on $N$, but rather on the number of neighbors a typical phase in the network has.
\end{remark}

\begin{remark}
Algorithm \ref{alg:ES} is a special case of the algorithms from \cite{SelimElseyTD} and \cite{SelimMattZahng}.
Those references focus on fully anisotropic, multiphase setting, and require $N\choose 2$ possibly distinct convolution kernels, which renders the use of the accelleration method from \cite{ees} mentioned in the previous remark highly non-obvious.
The novelty of Algorithm \ref{alg:ES} is keeping the number of convolution kernels to just $2$.
\end{remark}


In \cite{SelimFelix}, it is shown that no wetting occurs when using Algorithms \ref{alg:EO} and \ref{alg:EO_retardation} as long as the surface tensions $\sigma_{i,j}$ satisfy the strict triangle inequality. By performing a similar analysis, we show the same result here. Indeed, let $p$ denote a point along one of the smooth surfaces $\Gamma_{i,j}$ away from any junction. Set $u_i^k = \bm{1}_{\Sigma_i^k}$. Then, by simply Taylor expanding we have, near $p$,
\[
G_{\sqrt{\delta t}} \ast u_i^k \approx G_{\sqrt{\delta t}} \ast u_j^k \approx \frac{a_{i,j}+b_{i,j}}{2},
\]
while $G_{\sqrt{\delta t}} \ast u^k_l$ for $l\notin \{i,j\}$ is exponentially small in $\delta t$ near $p$. Thus, near $p$, the coefficients $\psi^k_l$ given by \eqref{alg:ES_comparison} in Algorithm \ref{alg:ES} become
\[
\psi^k_l \approx \begin{cases}
(K_{l,i})_{\sqrt{\delta t}} \ast u^k_i + (K_{l,j})_{\sqrt{\delta t}} \ast u^k_j	& \text{if }l \notin \{i,j\},\\
(K_{i,j})_{\sqrt{\delta t}} \ast u^k_j						& \text{if }l = i,\\
(K_{i,j})_{\sqrt{\delta t}} \ast u^k_i						& \text{if }l = j,\\
\end{cases} = \begin{cases}
\frac{a_{l,i} + a_{l,j}}{2} + \frac{b_{l,i}+b_{l,j}}{2}	& \text{if }l \notin \{i,j\},\\
\frac{a_{i,j} +b_{i,j}}{2}				& \text{if }l = i,\\
\frac{a_{i,j} +b_{i,j}}{2}				& \text{if }l = j,\\
\end{cases}
\]
with an error that is exponentially small in $\delta t$. If the coefficients $a_{i,j}+b_{i,j}$ satisfy a strict triangle inequality, this implies
\[
\min\left\{\psi^k_i(x),\psi^k_j(x)\right\} < \psi^k_l(x) \quad \text{for all } l \notin \{i,j\}
\]
for $x$ near $p$. Hence, wetting does not occur: no new phase gets nucleated along $\Gamma_{i,j}$. Computations show that 
\[
a_{i,j}+b_{i,j}	= \frac{\sqrt{\pi}}{\sqrt{\alpha}+\sqrt{\beta}}\sigma_{i,j} + \frac{\sqrt{\alpha}\sqrt{\beta}}{\sqrt{\alpha}+\sqrt{\beta}}\mu_{i,j}^{-1}
\]
and so if the $\mu_{i,j}^{-1}$ satisfy the triangle inequality, the coefficients $a_{i,j}+b_{i,j}$ satisfy the strict inequality, as desired. On the other hand, if the $\mu_{i,j}^{-1}$ do not satisfy the triangle inequality, there exist $M,\epsilon>0$ such that
\[
\max_{i,j,k} \mu_{i,j}^{-1} - \left(\mu_{i,k}^{-1} + \mu_{k,j}^{-1}\right) = M \quad \text{and} \quad \max_{i,j,k} \sigma_{i,j} - \left(\sigma_{i,k} + \sigma_{k,j}\right) = -\epsilon.
\]
(recall we are assuming the $\sigma_{i,j}$ satisfy the strict triangle inequality). Now, due the definition of $\epsilon$ and $M$, computations show that $a_{i,j} +b_{i,j}$ satisfy the strict triangle inequality provided we choose $\alpha$ and $\beta$ such that
\bq\label{eq:choice_alpha_beta_triangle_inequality}
\sqrt{\alpha}\sqrt{\beta}M < \sqrt{\pi}\epsilon,
\eq
which is always possible by making $\beta$ sufficiently small.
Notice that the choice of $\beta$ depends only on the speed at which each the interface moves.
Indeed, $\beta$ remains unchanged  upon scaling the surface tensions and mobilities provided $\sigma_{i,j}\mu_{i,j}$ remains constant (i.e., the interfaces still move at the same speed).

\subsection{Stability and convergence}\label{sec:stability}

We begin by showing that Algorithm \ref{alg:ES} is unconditionally gradient stable under some mild assumptions. Then we focus on the specific case of the Read \& Shockley model \cite{ReadShockley} and discuss as well the $\Gamma$-convergence of associated non-local energies.

\begin{proposition}\label{prop:stability}
Let the surface tensions matrix $\sigma \in \mathcal{T}_N$ and the matrix of reciprocal mobilities $\frac{1}{\mu} \in \mathcal{S}_N$ be conditionally negative definite.
Choose $\alpha$ and $\beta$ to satisfy
\bq\label{eq:choice_alpha_beta_cnsd}
\alpha \geq \frac{\min\limits_{i = 1,\ldots,N-1} s_i}{\max\limits_{i = 1,\ldots,N-1} m_i} \quad \text{and} \quad
\beta \leq \frac{\max\limits_{i = 1,\ldots,N-1} s_i}{\min\limits_{i = 1,\ldots,N-1} m_i},
\eq
where $s_i$ and $m_i$ are the nonzero eigenvalues of $J\sigma J$ and $J\frac{1}{\mu}J$, respectively, with $J = I-\frac{1}{N}e e^T$ denoting the orthogonal projection to $e^\perp = (1,\ldots,1)^\perp$.
Let the kernels $K_{i,j}$ be given by $K_{i,j} = a_{i,j} G_\alpha + b_{i,j} G_\beta$, where $a_{i,j}$ and $b_{i,j}$ are given by \eqref{eq:coefficients_kernels} and satisfy \eqref{eq:choice_alpha_beta_positive}.
Then, Algorithm \ref{alg:ES} is unconditionally gradient stable:
Each time step dissipates the non-local energies \eqref{eq:lyapunov_multiphase_EE} and \eqref{eq:lyapunov_multiphase_EE_relaxation}.
\end{proposition}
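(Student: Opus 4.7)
The plan is to view Algorithm \ref{alg:ES} as the minimizing-movements scheme driven by the non-local energy $E_{\sqrt{\delta t}}(\cdot,\bm{K}_{\sqrt{\delta t}})$ from \eqref{eq:lyapunov_multiphase_EE_relaxation} on the convex set $\mathcal{K}$ of \eqref{eq:box_constraint}, and to reduce unconditional gradient stability to concavity of this energy on $\mathcal{K}$. A short calculation parallel to that of \cite{SelimFelix} for Algorithm \ref{alg:EO} shows that the thresholding step \eqref{alg:ES_thresholding_step} picks out the pointwise minimizer over $\mathcal{K}$ of the first-order Taylor expansion of $E_{\sqrt{\delta t}}(\cdot,\bm{K}_{\sqrt{\delta t}})$ at $\bm{u}^k$: indeed, $\psi_i^k=\sum_{j\ne i}(K_{i,j})_{\sqrt{\delta t}}\ast\bm{1}_{\Sigma_j^k}$ is, up to a positive multiplicative constant, the $u_i$-partial functional derivative of the relaxed energy at $\bm{u}^k$. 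Once concavity on $\mathcal{K}$ is in hand, the tangent plane lies above the graph and
\[
E_{\sqrt{\delta t}}(\bm{u}^{k+1},\bm{K}_{\sqrt{\delta t}})\le E_{\text{lin}}(\bm{u}^{k+1})\le E_{\text{lin}}(\bm{u}^k)=E_{\sqrt{\delta t}}(\bm{u}^k,\bm{K}_{\sqrt{\delta t}}),
\]
yielding dissipation of \eqref{eq:lyapunov_multiphase_EE_relaxation}; since every iterate is a partition indicator, dissipation of the sharp-interface energy \eqref{eq:lyapunov_multiphase_EE} follows automatically.

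The crux is therefore the concavity of $E_{\sqrt{\delta t}}(\cdot,\bm{K}_{\sqrt{\delta t}})$ on $\mathcal{K}$. Substituting $K_{i,j}=a_{i,j}G_{\sqrt{\alpha}}+b_{i,j}G_{\sqrt{\beta}}$ with \eqref{eq:coefficients_kernels} and invoking Parseval together with $\widehat{G_{\sqrt{\gamma}}}(\xi)=e^{-\gamma|\xi|^2}\ge 0$, the second variation along any direction $\bm{v}$ satisfying $\sum_i v_i\equiv 0$ can be written, up to a positive constant, as
\[
\int_{\Rd}\!\Big(e^{-\alpha\delta t|\xi|^2}\,\hat{\bm{v}}(\xi)^{*} a\,\hat{\bm{v}}(\xi)+e^{-\beta\delta t|\xi|^2}\,\hat{\bm{v}}(\xi)^{*} b\,\hat{\bm{v}}(\xi)\Big)\,d\xi,
\]
where $a=(a_{i,j})$ and $b=(b_{i,j})$ are real symmetric coefficient matrices. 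The constraint $\sum_i v_i\equiv 0$ Fourier-transforms to $\sum_i\hat{v}_i(\xi)=0$ for every $\xi$, and since both Gaussian weights are positive it suffices to show that each of $a$ and $b$ is negative semi-definite on $e^\perp$, i.e., conditionally negative semi-definite. In view of \eqref{eq:coefficients_kernels} and the positivity of the prefactors $\sqrt{\pi\alpha}/(\alpha-\beta)$ and $\sqrt{\pi\beta}/(\alpha-\beta)$, this reduces to the two matrix inequalities $\sigma-\beta\tfrac{1}{\mu}\preceq 0$ and $\alpha\tfrac{1}{\mu}-\sigma\preceq 0$ on $e^\perp$.

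It remains to verify that \eqref{eq:choice_alpha_beta_cnsd} guarantees both inequalities. For any $v\in e^\perp\setminus\{0\}$, set $S:=-v^\top\sigma v\ge 0$ and $M:=-v^\top\tfrac{1}{\mu}v>0$; the two requirements become $\beta M\le S\le\alpha M$, i.e., $\beta\le S/M\le\alpha$. The spectral theorem applied to $J\sigma J$ and $J\tfrac{1}{\mu}J$ on $e^\perp$ gives $S\in[|s|_{\min},|s|_{\max}]\|v\|^2$ and $M\in[|m|_{\min},|m|_{\max}]\|v\|^2$, whence
\[
\frac{|s|_{\min}}{|m|_{\max}}\le\frac{S}{M}\le\frac{|s|_{\max}}{|m|_{\min}},
\]
and a straightforward sign check converts the outer ratios into $\max_i s_i/\min_i m_i$ and $\min_i s_i/\max_i m_i$ respectively, matching \eqref{eq:choice_alpha_beta_cnsd}. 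I expect the main technical obstacle to be the bookkeeping of signs in this generalized-Rayleigh-quotient bound, together with the fact that $J\sigma J$ and $J\tfrac{1}{\mu}J$ need not commute -- hence one is forced to use the non-tight product-of-extreme-eigenvalues estimate above in place of a simultaneous diagonalization. Every other step of the argument is strictly parallel to the analysis of Algorithm \ref{alg:EO} in \cite{SelimFelix}.
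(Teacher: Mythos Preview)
Your proposal is correct and follows essentially the same route as the paper: reduce unconditional gradient stability to the conditional negative semi-definiteness of the two coefficient matrices $A=(a_{i,j})$ and $B=(b_{i,j})$, then verify these matrix inequalities via Rayleigh-quotient bounds for $J\sigma J$ and $J\tfrac{1}{\mu}J$ on $e^\perp$, which is exactly where condition \eqref{eq:choice_alpha_beta_cnsd} enters. The paper is terser, invoking Proposition~5.3 of \cite{SelimFelix} for the first reduction, whereas you spell out the minimizing-movements and Fourier-side concavity computation explicitly; the substantive content is the same.
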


\begin{proof}
Examining the proof of Proposition 5.3 in \cite{SelimFelix}, it is sufficient to show that both $A = (a_{i,j})$ and $B = (a_{i,j})$ are conditionally negative semi-definite, where $a_{i,j}$ and $b_{i,j}$ are given according to \eqref{eq:coefficients_kernels}. This will follow from showing that
\[
\sup_{\substack{v\neq0\\v \in e^\perp}} \frac{v^TA v}{\abs{v}^2} < 0 \quad \text{and} \quad \sup_{\substack{v\neq0\\v \in e^\perp}} \frac{v^TB v}{\abs{v}^2} < 0.
\]
In addition, one can show that
\[
\sup_{\substack{v\neq0\\v \in e^\perp}} \frac{v^T\sigma v}{\abs{v}^2} = \max_{i = 1,\ldots,N-1} s_i \quad \text{and} \quad 
\inf_{\substack{v\neq0\\v \in e^\perp}} \frac{v^T\sigma v}{\abs{v}^2} = \min_{i = 1,\ldots,N-1} s_i
\]
and similarly for $\frac{1}{\mu}$. The proof then follows due to the choice of $\alpha$ and $\beta$.
\end{proof}

We now recall models for surface tensions and mobilities that are very commonly used in grain boundary motion simulations in the materials science literature: Read-Shockley surface tensions along with equal mobilities.
In \cite{ReadShockley}, Read and Shockley describe a model for the grain boundary formed between two planar grains with square lattices.
Their calculation shows that the surface tension of the boundary is a specific function of the misorientation angle between the two lattices under the assumption that this angle is small. Each grain is assigned an orientation angle: when restricted to a plane, the orientation of a square lattice is uniquely determined by the angle $\theta$ of a clockwise rotation about the origin that maps it back to the standard two-dimensional lattice $\Z^2$. Given the symmetries of the square lattice, we can take $\theta \in [-\frac{\pi}{4},\frac{\pi}{4}]$ with the two endpoints of the interval identified. 
The surface tension $\sigma_{i,j}$ of the interface between two grains with orientations $\theta_i$ and $\theta_j$ in the Read-Shockley model for two-dimensional crystallography has the form
\bq\label{eq:RS2Dmodel_surface_tension}
\sigma_{i,j} = \min_{k\in\Z} f\left(\Abs{\theta_i-\theta_j+k\frac{\pi}{2}}\right)
\eq
where $f:\R^+\to\R$ satisfies
\begin{enumerate}[label=(H\arabic*)]
	\item \label{H1} $f \in C([0,\infty)) \cap C^2((0,\infty))$ and $\lim_{\xi\to 0^+} \xi^2f'(\xi) = 0$,
	\item $f(0) = 0$ and $f(\xi) \geq 0$ for all $\xi$,
	\item $f'(\xi) \geq 0$ for all $\xi > 0$,
	\item \label{H4} $f''(\xi) \leq 0$ for all $\xi > 0$.
\end{enumerate}

We follow the extension of Read-Shockley model to three-dimensional crystallography given in \cite{HolmMisorientationAngles}.
The orientation of a grain with cubic lattice can be described (nonuniquely) by a matrix $g \in SO(3)$ that corresponds to the rotation required to obtain the lattice of the grain from the standard integer lattice $\Z^3$.
In turn, any matrix $g \in SO(3)$ can be described as a rotation by an angle $\theta \in [0,\pi]$ about an axis $v\in\mathcal{S}^2$. According to \cite{HolmMisorientationAngles}, the surface tension $\sigma_{i,j}$ of the interface $\Gamma_{i,j}$ depends only on the misorientation angle (and not on the axis) between the two grains $g_i$ and $g_j$ and can be defined as follows. Let $\bO$ denote the octahedral group (of symmetries of the cube in the three dimensions). Define the minimal angle of rotation $\theta_{\bO}(g)$ of $g \in SO(3)$ as
\[
\theta_{\bO}(g) = \min_{r\in\bO} \theta(rg).
\]
The misorientation angle $g_i$ and $g_j$ is defined to be
\bq\label{eq:RS3Dmodel_misorientation_angle}
\theta_{i,j} = \theta_{\bO}(g_ig_j^T),
\eq
and the corresponding surface tension $\sigma_{i,j}$ is given by
\bq\label{eq:RS3Dmodel_surface_tension}
\sigma_{i,j} = f(\theta_{i,j})
\eq
where $f:\R^+\to\R$ is given by
\bq\label{eq:RS3Dmodel_f}
f(\theta) = \begin{cases}
\frac{\theta}{\theta_*}\left(1-\log\left(\frac{\theta}{\theta_*}\right)\right)	& \text{if } \theta < \theta_*,\\
1	& \text{if } \theta \geq \theta_*
\end{cases}
\eq
as in \cite{HolmMisorientationAngles,ReadShockley}. Here $\theta_*$ is a critical misorientation value, known as the Brandon angle, that denotes the rotation angle beyond which the surface tension saturates. According to \cite{HolmMisorientationAngles}, it has ben experimentally determined to lie somewhere betweeen $10^\circ$ and $30^\circ$.

\begin{theorem}\label{thm:RSmodel}
For a network of grains in dimension $d\in\{2,3\}$ in which each grain has a distinct orientation, let the surface tensions $\sigma_{i,j}$ of the grain boundaries be given by the Read \& Shockley model \eqref{eq:RS2Dmodel_surface_tension} for $d=2$, or the extension \eqref{eq:RS3Dmodel_surface_tension} of the same to 3D crystallography with $\theta_* \leq \frac{\pi}{4} = 45^\circ$ for $d=3$. Let the mobilities of the boundaries be given by $\mu_{i,j} = 1$ and construct the kernels $K_{i,j} = a_{i,j} G_\alpha + b_{i,j} G_\beta$, where $a_{i,j}$ and $b_{i,j}$ are given by \eqref{eq:coefficients_kernels} and satisfy \eqref{eq:choice_alpha_beta_positive}.

Then:
\begin{enumerate}
\item \label{statement_thm:stability} Algorithm \ref{alg:ES} is unconditionally gradient stable, i.e.,
each time step dissipates the non-local energies \eqref{eq:lyapunov_multiphase_EE} and \eqref{eq:lyapunov_multiphase_EE_relaxation}, provided $\alpha$ and $\beta$ satisfy \eqref{eq:choice_alpha_beta_cnsd}.
\item \label{statement_thm:convergence} The non-local energy \eqref{eq:lyapunov_multiphase_EE_relaxation} converges to the sharp interface limit \eqref{eq:energy_multiphase_EO_relaxation} in the sense of $\Gamma$-convergence as $\delta t \to 0^+$.
\end{enumerate}
\end{theorem}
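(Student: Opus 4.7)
The plan is to reduce each of the two claims to a previously established general result: statement~(\ref{statement_thm:stability}) to Proposition~\ref{prop:stability}, and statement~(\ref{statement_thm:convergence}) to Theorem~\ref{thm:Gamma_convergence_EO} applied to a decomposition of the energy. The real work lies in verifying the hypotheses of these prior results for the specific Read \& Shockley surface tensions with equal mobilities.

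For statement~(\ref{statement_thm:stability}), I would carry out three checks. First, since $\mu_{i,j} \equiv 1$, the reciprocal-mobility matrix equals $ee^T - I$, and $v^T(ee^T - I) v = -|v|^2 < 0$ for every $v \in e^\perp \setminus \{0\}$, so it is conditionally negative definite. Second, $\sigma \in \mathcal{T}_N$: in both $d=2,3$, $\sigma_{i,j} = f(d_{i,j})$ where $d_{i,j}$ is the angular (pseudo-)distance on the orientation space, and the triangle inequality for $\sigma$ reduces to that for $d$ combined with the subadditivity of $f$, which is an immediate consequence of \ref{H1}--\ref{H4} (a nondecreasing concave function with $f(0) = 0$ is automatically subadditive). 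Third, and substantively, conditional negative semi-definiteness of $\sigma$ must be established.

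The main obstacle is this CND verification. My plan is to employ the representation $f(t) = c\, t + \int_0^\infty \min(t,s)\,(-f''(s))\,ds$, valid for any concave $f$ with $f(0)=0$ satisfying \ref{H1}--\ref{H4}, which reduces CND of $\sigma = (f(d_{i,j}))_{i,j}$ to CND of the two elementary families of kernels $(d_{i,j})_{i,j}$ and $(\min(d_{i,j}, s))_{i,j}$ for $s > 0$. Both of these can be realized as squared Hilbert-space distances by embedding the orientation space into $L^2$ via indicator functions of geodesic intervals/balls, and squared $L^2$ distances are always CND. For $d=2$ the ambient space is the quotient circle $\R/(\tfrac{\pi}{2}\Z)$ and the embedding is classical; for $d=3$ the saturation condition $\theta_* \leq \pi/4$ is what keeps the truncated $f$ from \eqref{eq:RS3Dmodel_f} compatible with the octahedral fundamental domain of $\theta_\bO$.

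For statement~(\ref{statement_thm:convergence}), I would split the energy into its two Gaussian components:
\begin{equation*}
E_{\sqrt{\delta t}}(\bm{u}, \bm{K}_{\sqrt{\delta t}}) = \frac{1}{\sqrt{\delta t}}\sum_{(i,j)\in\mathcal{I}_N} a_{i,j}\int_D u_j\, G_{\sqrt{\alpha\delta t}}\ast u_i \dx + \frac{1}{\sqrt{\delta t}}\sum_{(i,j)\in\mathcal{I}_N} b_{i,j}\int_D u_j\, G_{\sqrt{\beta\delta t}}\ast u_i \dx.
\end{equation*}
Each summand is a non-local multiphase energy of the exact form \eqref{eq:lyapunov_multiphase_EO_relaxation} with a single (rescaled) Gaussian kernel and surface tension matrix $A=(a_{i,j})$ or $B=(b_{i,j})$, respectively. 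The change of variable $\tilde\epsilon = \sqrt{\alpha\delta t}$ (resp.\ $\sqrt{\beta\delta t}$) places the first piece into the setting of Theorem~\ref{thm:Gamma_convergence_EO} up to the overall factor $\sqrt{\alpha}$ (resp.\ $\sqrt{\beta}$). Provided $A, B \in \mathcal{T}_N$, Theorem~\ref{thm:Gamma_convergence_EO} delivers the $\Gamma$-limit of each piece along with its $L^1$-precompactness; summing and using the algebraic identity $\sqrt{\alpha}\,a_{i,j}+\sqrt{\beta}\,b_{i,j} = \sqrt{\pi}\,\sigma_{i,j}$, which is immediate from \eqref{eq:coefficients_kernels}, recovers the target \eqref{eq:energy_multiphase_EO_relaxation}. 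The condition $A, B \in \mathcal{T}_N$ translates, via \eqref{eq:coefficients_kernels}, into the two bounds $\beta \leq \min_{i,j,k}(\sigma_{i,j} + \sigma_{j,k} - \sigma_{i,k})$ and $\alpha \geq \max_{i,j,k}(\sigma_{i,j} + \sigma_{j,k} - \sigma_{i,k})$, both achievable within the positivity constraint \eqref{eq:choice_alpha_beta_positive} thanks to the strict triangle inequality enjoyed by Read-Shockley surface tensions.
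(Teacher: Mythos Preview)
For part~(\ref{statement_thm:stability}) your plan matches the paper's: both reduce to Proposition~\ref{prop:stability} after checking that $\sigma$ and $\frac{1}{\mu}$ are conditionally negative definite. The paper simply cites Theorems~5.5 and~5.6 of \cite{SelimFelix} for the conditional negative definiteness of the Read--Shockley $\sigma$; your integral-representation-plus-Hilbert-embedding sketch is essentially a reconstruction of that argument.

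For part~(\ref{statement_thm:convergence}) you take a genuinely different route. The paper does \emph{not} decompose the energy into its $G_{\sqrt{\alpha}}$ and $G_{\sqrt{\beta}}$ components. Instead, it argues that the full kernels $K_{i,j}$ are positive and satisfy a \emph{pointwise} triangle inequality $K_{i,j}(x)+K_{j,k}(x)\geq K_{i,k}(x)$ (attributed to $\sigma,\frac{1}{\mu}\in\mathcal{T}_N$ together with \eqref{eq:choice_alpha_beta_positive}), and then invokes a direct extension of the $\Gamma$-convergence proof (Proposition~A.1 of \cite{SelimFelix}) to the multi-kernel functional \eqref{eq:lyapunov_multiphase_EE_relaxation}. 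Your approach is more modular in that it reuses Theorem~\ref{thm:Gamma_convergence_EO} as a black box, but two points deserve attention. First, $\Gamma$-limits are not additive in general; you should say explicitly that the recovery sequence (the constant sequence $\bm{u}_\epsilon=\bm{u}$ for $\bm{u}\in BV_{\mathbb{B}}$) is common to both pieces, while the $\liminf$ inequality comes for free from superadditivity of $\liminf$ combined with the compactness delivered by either piece. Second, your hypothesis $A,B\in\mathcal{T}_N$ is strictly stronger than the paper's pointwise triangle inequality on $K_{i,j}$ and, as you note, forces the \emph{additional} constraints $\beta\leq\min\Delta\sigma$ and $\alpha\geq\max\Delta\sigma$ on top of \eqref{eq:choice_alpha_beta_positive}; the theorem as stated asserts $\Gamma$-convergence for \emph{every} $\alpha,\beta$ satisfying \eqref{eq:choice_alpha_beta_positive}, so your argument covers a smaller range of parameters than claimed. (In fairness, the paper's own route requires at minimum $\Delta a\geq 0$, hence $\beta\leq\min\Delta\sigma$ as well, so this narrowing is shared.)
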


\begin{proof}
A close examination of Theorem 5.5 and 5.6 from \cite{SelimFelix} shows that the surface tension matrix $\sigma$ is in fact conditionally negative definite, provided that all grains in the network have distinct orientations. Thus \eqref{statement_thm:stability} follows by Proposition \ref{prop:stability}. On the other hand, a close examination of Proposition A.1 in \cite{SelimFelix} shows that the proof can be extended to the non-local energies \eqref{eq:lyapunov_multiphase_EE_relaxation} when the kernels $K_{i,j}$ are positive and satisfy a pointwise triangle inequality. That is precisely the case here due to condition \eqref{eq:choice_alpha_beta_positive} which guarantees the positiveness of the kernels and since both $\sigma,\frac{1}{\mu} \in \mathcal{T}_N$. Thus \eqref{statement_thm:convergence} follows as well.
\end{proof}

\begin{remark}
The results in Theorem \ref{thm:RSmodel} extend to reciprocal mobility matrices in $\mathcal{T}_N$ that are conditionally negative definite.
\end{remark}

\section{Non-convergence of nonlocal multiphase energies}\label{sec:nonconvergence}

In this section, we discuss whether Algorithm \ref{alg:ES} along with the proposed kernel construction can be used on more general surface tension and mobility pairs than allowed by Theorem \ref{thm:RSmodel}, for instance for the larger class of Proposition \ref{prop:SelimFelixStability}.
In particular, we show that even for perfectly reasonable choices of $(\sigma_{i,j},\mu_{i,j})$, the algorithm can fail to converge to the correct evolution.
The culprit turns out to be failure of $\Gamma$-convergence of the correspoding multiphase non-local energy to the expected limit \eqref{eq:energy_multiphase_EO_relaxation} where surface tensions are given by the simple formula \eqref{eq:identities_sigma_mu}.
More specifically, our example has the following features:
\begin{enumerate}
\item The prescribed surface tensions $\sigma_{i,j}$ satisfy the triangle inequality \eqref{eq:triangle_inequality}, so that multiphase model \eqref{eq:energy_multiphase_isotropic} is well-posed.
\item The prescribed surface tensions $\sigma_{i,j}$ and reciprocal mobilities $\mu_{i,j}^{-1}$ are conditionally negative semi-definite, so that Algorithm \ref{alg:ES} is unconditionally gradient stable, decreasing the multiphase non-local energy \eqref{eq:lyapunov_multiphase_EE} at every time step.
\item The corresponding kernels $K_{i,j}$ are positive, so that the non-local \emph{two-phase} energy \eqref{eq:lyapunov_twophase}
corresponding to each $K_{i,j}$ converges in the sense of $\Gamma$-convergence to
\[
E(\Sigma,\sigma_{i,j}) = \int_{\partial\Sigma} \sigma_{i,j}(n(x)) \:dH^{d-1}(x)
\]
with the desired surface tension $\sigma_{i,j}$ given by formula \eqref{eq:identities_sigma_mu} by construction of $K_{i,j}$.
\item However, the $\Gamma$-limit of the \emph{multiphase} non-local energies \eqref{eq:lyapunov_multiphase_EE_relaxation} utilizing the $K_{i,j}$ as convolution kernels is {\em not} the desired limit \eqref{eq:energy_multiphase_EO_relaxation} where the surface tensions are the prescribed $\sigma_{i,j}$.
\item In general, failure of $\Gamma$-convergence need not imply failure of gradient descent dynamics, since gradient flow -- a local optimization strategy -- may never find the dramatically energy reducing perturbation to the structure of the interface that is responsible for the failure of $\Gamma$-convergence.
However, our numerical experiments in \autoref{sec:numerics} show that Algorithm \ref{alg:ES} in fact finds the perturbation, which then leads to the failure of the dynamics it generates.
\end{enumerate}

\subsection{A simple counterexample to $\Gamma$-convergence}\label{sec:counterexample}

The example below shows that simply requiring each kernel $K_{i,j}$ to satisfy sufficient conditions (i.e. $K_{i,j}\geq 0$ and \eqref{eq:identities_sigma_mu_classic} with our construction) for $\Gamma$-convergence in the corresponding two-phase setting is not sufficient for $\Gamma$-convergence of the multiphase energy, even when the desired surface tensions $\sigma_{i,j}$ satisfy the triangle inequality and hence come from a well-posed sharp interface variational model.
It is described in the 1-dimensional setting for simplicity (but can of course be extended to any dimensions).

\begin{example}\label{ex:counterexample}
Let $\Omega = \R$ and consider its partition into $N=3$ phases, parameterized by $\epsilon > 0$, given by
\[
\Sigma_{1,\epsilon}	= \left\{x\in\Omega:x\geq\epsilon\right\}, \quad \Sigma_{2,\epsilon} = \left\{x\in\Omega:x\leq-\epsilon\right\}, \quad \Sigma_{3,\epsilon} = \left\{x\in\Omega:\abs{x}\leq\epsilon\right\}.
\]
Set
\[
u_{1,\epsilon} = \bm{1}_{\Sigma_{1,\epsilon}} = \bm{1}_{[\epsilon,\infty)}, \quad u_{2,\epsilon} = \bm{1}_{\Sigma_{2,\epsilon}} = \bm{1}_{(-\infty,-\epsilon]}, \quad u_{3,\epsilon} = \bm{1}_{\Sigma_{3,\epsilon}} = \bm{1}_{[-\epsilon,\epsilon]}.
\]
Define the kernels as follows:
\[
K_{1,2} = \bm{1}_{[-1,1]} \quad \text{and} \quad K_{1,3} = K_{2,3} = \delta_1\left(\bm{1}_{[-11,-9]}+\bm{1}_{[9,11]}\right) + \delta_2\bm{1}_{[-1,1]},
\]
where $\delta_1,\delta_2 > 0$ will be chosen later. Notice that the corresponding surface tensions are given by
\[
\sigma_{1,2} = \frac{1}{2}\int_\R \abs{x} K_{1,2}(x)\dx = \frac{1}{2} \quad \text{and} \quad \sigma_{1,3} = \sigma_{2,3} = \frac{1}{2}\int_\R \abs{x} K_{2,3}(x)\dx = 20 \delta_1 + \frac{\delta_2}{2}.
\]
while the mobilities are given by
\[
\mu_{1,2} = \frac{1}{2K_{1,2}(0)} = \frac{1}{2} \quad \text{and} \quad \mu_{1,3} = \mu_{2,3} = \frac{1}{2K_{2,3}(0)} = \frac{1}{2\delta_2}.
\]
Here, the non-local approximate energy is given by
\[
E_{\epsilon} (\bm{u}_\epsilon,\bm{K}_\epsilon) = \frac{1}{\epsilon} \sum_{(i,j) \in \mathcal{I}_N} \int_\R u_{j,\epsilon}(x) \left(K_{i,j}\right)_\epsilon \ast u_{i,\epsilon}(x) \dx = 16 \delta_1 + 2\delta_2,
\]
which follows from
\begin{align*}
	& \:\frac{1}{\epsilon} \int_\R u_{1,\epsilon}(x) \left(K_{1,3}\right)^\epsilon \ast u_{3,\epsilon}(x) \dx\\
=	& \:\frac{1}{\epsilon} \int_\R \int_\R \bm{1}_{[\epsilon,\infty)}(x)  \left(\delta_1 \bm{1}_{[9,11]}(h) + \delta_2 \bm{1}_{[-1,1]}(h)\right) \bm{1}_{[-\epsilon,\epsilon]}(x-\epsilon h) \:dh\dx\\
=	& \: 4\delta_1 + \frac{\delta_2}{2}
\end{align*}
(since the double integral in the second equality corresponds to the area of a parallelogram of base $2\epsilon$ and height 2 and a triangle of side $\epsilon$ and height $1$) and
\[
\frac{1}{\epsilon} \int_\R u_{1,\epsilon}(x) \left(K_{1,2}\right)^\epsilon \ast u_{2,\epsilon}(x) \dx = \frac{\delta}{\epsilon} \int_\R \int_\R \bm{1}_{[\epsilon,\infty)}(x)  \bm{1}_{[-1,1]}(h) \bm{1}_{(-\infty,-\epsilon]}(x-\epsilon h) \:dh\dx = 0.
\]

On the other hand, the limiting energy is given by
\[
E(\bm{u}_0,\sigma) = 2 \: \sigma_{1,2} = 1.
\]
Now, we observe that e.g. the choice $\delta_1 = \frac{1}{64}$ and $\delta_2 = 5/16$ guarantees that both the surface tension matrix $\sigma$ and reciprocal mobility matrix $\frac{1}{\mu}$ are conditionally negative semi-definite. Moreover, the surface tension matrix satisfies the triangle inequality. However $\Gamma$-convergence fails since
\[
\liminf_{\epsilon \to 0} E_{\epsilon} (\bm{u}_\epsilon,\bm{K}_\epsilon) < E(\bm{u}_0,\sigma).
\]
Notice that in this case the reciprocal mobilities $\mu_{i,j}^{-1}$ do not satisfy the triangle inequality.
\end{example}

\section{Numerical evidence}\label{sec:numerics}

We present a variety of numerical tests for Algorithm \ref{alg:ES}. We focus on classical numerical convergence studies for short-time evolution (during which topological changes do not take place) starting from an initial condition with triple junctions formed by the meeting of smooth curves. It is worth mentioning nonetheless that threshold dynamics methods shine when it comes to challenging configurations that involve topological changes. For all the examples considered below, both $\sigma$ and $\frac{1}{\mu}$ are conditionally negative definite. We choose $\alpha$ and $\beta$ as the smallest and largest constants, respectively that satisfy both \eqref{eq:choice_alpha_beta_positive} and \eqref{eq:choice_alpha_beta_cnsd}. This guarantees that the kernels are positive and that Algorithm \ref{alg:ES} dissipates the non-local energies \eqref{eq:lyapunov_multiphase_EE} and \eqref{eq:lyapunov_multiphase_EE_relaxation} at each time step.

\subsection{Comparisons with Exact Solutions}

We start by considering two examples for which the exact solutions of the threshold dynamics \eqref{normalspeed} and \eqref{angles_relationship} are well-known. These solutions are known as \emph{grim-reaper} solution \cite{GrimReaperSolution}: two of the interfaces are travelling waves moving with constant vertical speed, while the third remains a line segment.

\begin{example}\label{ex:grim_reaper_1}
We consider first the following symmetric case where the surface tension and mobility matrices are given by
\[
\sigma = \begin{pmatrix}
0 & \sqrt{2} & 1\\
\sqrt{2} & 0 & 1\\
1 & 1 & 0
\end{pmatrix} \quad \text{and} \quad
\mu = \begin{pmatrix}
0 & 1 & 1\\
1 & 0 & 1\\
1 & 1 & 0
\end{pmatrix}.
\]

The corresponding angles at the junctions are ($135^\circ,135^\circ,90^\circ)$. The two interfaces $\Gamma_{1,2}$ and $\Gamma_{1,3}$ are then graphs of functions $f_{1,2}(x,t):\left[0,\frac{1}{4}\right]\to \R$ and $f_{1,3}(x,t):\left[\frac{1}{4},\frac{1}{2}\right]\to \R$ that move by vertical translations:
\[
f_{1,2}(x,t) = \frac{7}{8} + \frac{1}{\pi}\log(\cos(\pi x)) - \pi t \quad \text{and} \quad f_{1,3}(x,t) = f_{1,3}\left(\frac{1}{2}-x,t\right).\\
\]
The interfaces satisfy the natural boundary condition of $90^\circ$ intersection with the boundary of the domain $[0,\frac{1}{2}]\times [0,\frac{1}{2}]$ (i.e. $\partial_x f_{1,3}(0,t) = 0$ and $\partial_x f_{2,3}(0,t) = 0$. Numerically, the initial configuration is extended evenly to $[0,1]\times [0,1]$ by reflection, which is then computed with periodic boundary conditions using Algorithm \ref{alg:ES}. The $L^\infty$ error between the computed and exact $f_{1,3}$ and $f_{2,3}$ at time $t = 0.1$ is shown in the Table \ref{table:ex_grim_reaper}. Figure \ref{fig:grim_reaper_1} shows the initial solution, the computed solution, and the exact solution in black, blue and red, respectively.
\end{example}

\begin{example}\label{ex:grim_reaper_2}
We consider as well an asymmetric grim-reaper solution. In this case, the surface tension and mobility matrices are given by
\[
\sigma = \begin{pmatrix}
0 & 1 & \frac{\sqrt{2}}{1+\sqrt{3}}\\
1 & 0 & \frac{2}{1+\sqrt{3}}\\
\frac{\sqrt{2}}{1+\sqrt{3}} & \frac{2}{1+\sqrt{3}} & 0
\end{pmatrix} \quad \text{and} \quad
\mu = \begin{pmatrix}
0 & \frac{1}{4\sqrt{2}} & 1\\
\frac{1}{4\sqrt{2}} & 0 & \frac{1}{4\sqrt{2}}\\
1 & \frac{1}{4\sqrt{2}} & 0
\end{pmatrix}.
\]
The corresponding angles at the junctions are ($135^\circ,150^\circ,75^\circ)$. The two interfaces $\Gamma_{1,2}$ and $\Gamma_{1,3}$ are then graphs of functions $f_{1,2}(x,t):\left[0,\frac{3}{8}\right]\to \R$ and $f_{1,3}(x,t):\left[\frac{3}{8},\frac{1}{2}\right]\to \R$ that move by vertical translations:
\begin{align*}
f_{1,2}(x,t) & = \frac{3}{2\pi}\log\left(\cos\left(\frac{2\pi}{3}x\right)\right) - \frac{2\sqrt{2}\pi}{3(1+\sqrt{3})}t,\\
f_{1,3}(x,t) & = \frac{3}{8\pi}\log\left(\frac{1}{2}\cos\left(\frac{4\pi(1-2x)}{3}\right)\right) - \frac{2\sqrt{2}\pi}{3(1+\sqrt{3})}t.\\
\end{align*}
Like in the previous example, the initial configuration is extended evenly to $[0,1]\times[0,1]$ with periodic boundary conditions and we use Algorithm \ref{alg:ES}. The $L^\infty$ error between the computed and exact $f_{1,3}$ and $f_{2,3}$ at time $t = 0.096$ is shown in the Table \ref{table:ex_grim_reaper}. Figure \ref{fig:grim_reaper_2} shows the initial solution, the computed solution, and the exact solution in black, blue and red, respectively.

\end{example}

\begin{table}[htp]
\centering
\footnotesize
\begin{tabular}{cccc}
\hline
 & \multicolumn{3}{l}{Errors and order, Example \ref{ex:grim_reaper_1}} \\ \cline{2-4}
$\#$ Time steps	& $\#$ Grid points	& $L^\infty$ error	& Conv. rate\\
\hline
50 & $128 \times 128$ & 0.0074 & - \\
100 & $256 \times 256$ & 0.0058 & 0.35 \\
200 & $512 \times 512$ & 0.0044 & 0.41 \\
400 & $1024 \times 1024$ & 0.0029 & 0.61 \\
800 & $2048 \times 2048$ & 0.0020 & 0.55 \\
1600 & $4096 \times 4096$ & 0.0014 & 0.52 \\
 & \multicolumn{3}{l}{Errors and order, Example \ref{ex:grim_reaper_2}} \\ \cline{2-4}
$\#$ Time steps	& $\#$ Grid points	& $L^\infty$ error	& Conv. rate\\
\hline
20 & $128 \times 128$ & 0.0239 & - \\
40 & $256 \times 256$ & 0.0183 & 0.39 \\
80 & $512 \times 512$ & 0.0133 & 0.46 \\
160 & $1024 \times 1024$ & 0.0094 & 0.49 \\
320 & $2048 \times 2048$ & 0.0060 & 0.66 \\
640 & $4096 \times 4096$ & 0.0036 & 0.74 \\
\end{tabular}
\caption{Errors in the $L^\infty$ norm for grim-reaper type examples.}
\label{table:ex_grim_reaper}
\end{table}

\begin{figure}[htp]
\centering
\subfigure[]{
\includegraphics[width=0.4\textwidth]{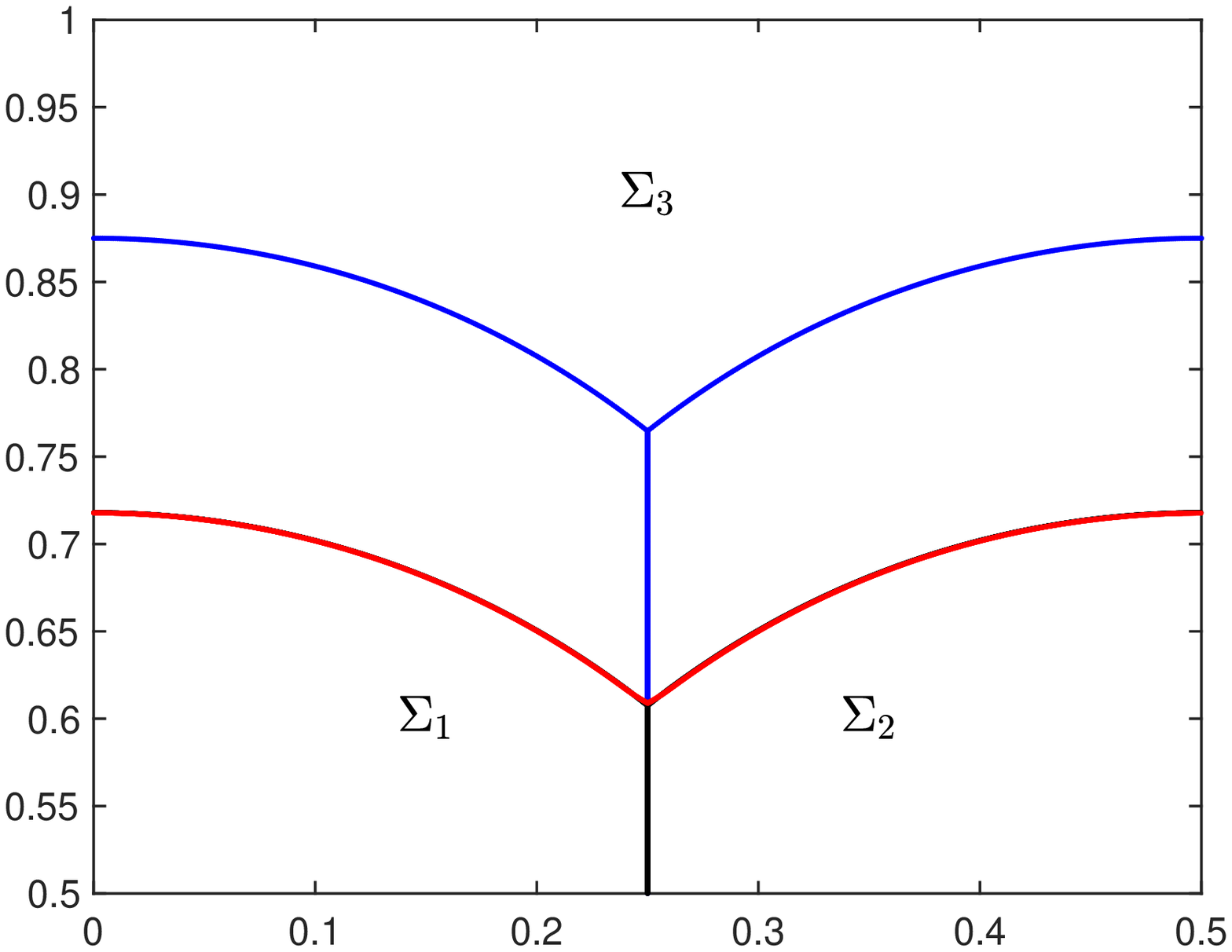}\label{fig:grim_reaper_1}}
\hspace{2ex} 
\subfigure[]{
\includegraphics[width=0.4\textwidth]{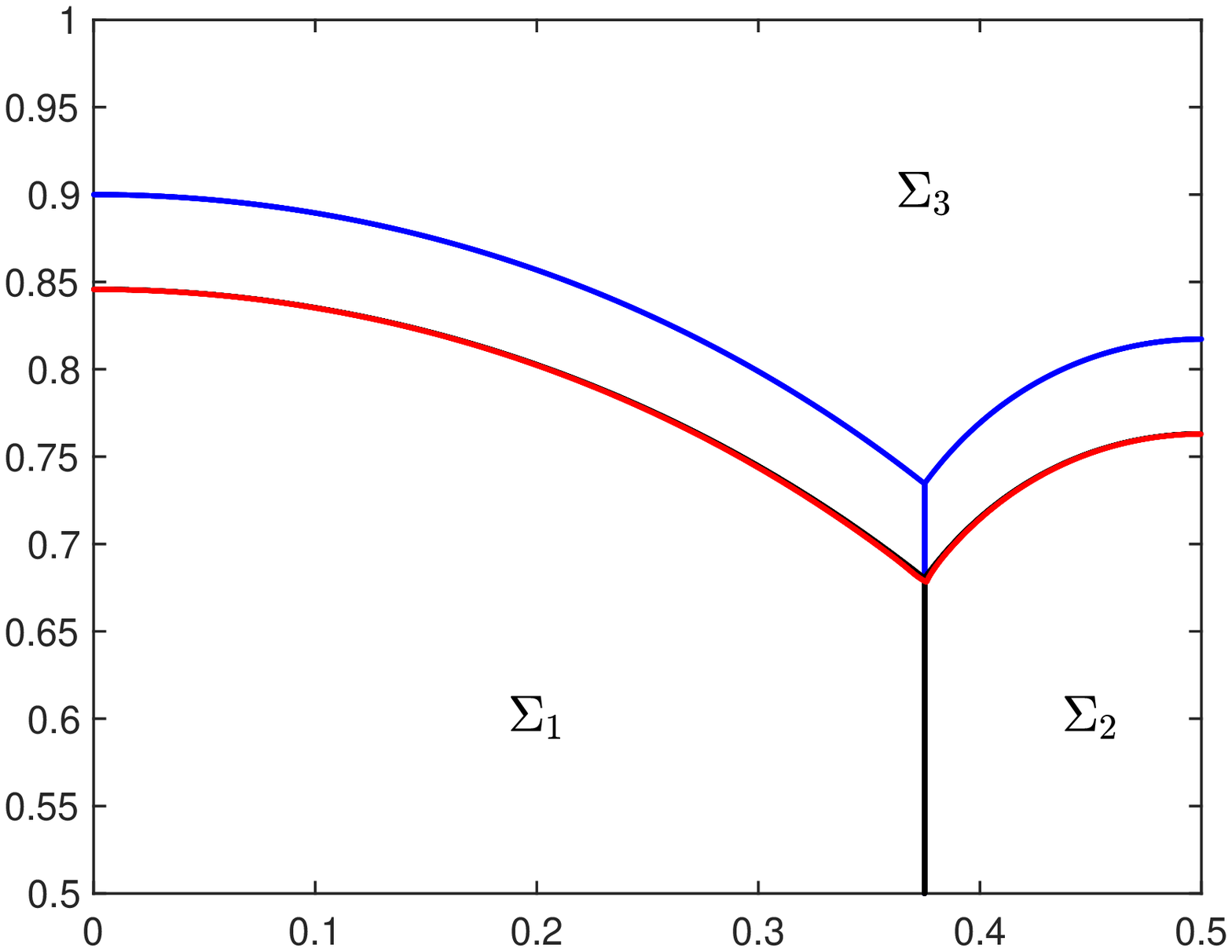}\label{fig:grim_reaper_2}}
\caption{Evolution of a three-phase grim-reaper like configuration where the blue curve shows the initial condition. Final configuration computed using threshold dynamics Algorithm \ref{alg:ES} (red), compared to the exact solution (black).}
\label{fig:grim_reaper}
\end{figure}

\subsection{Comparisons with Front Tracking}

In the absence of topological changes, and when starting from a smooth initial condition consisting only of triple junctions, a very appropriate and efficient algorithm for computing the curvature flow \eqref{normalspeed} under constraint \eqref{angles_relationship} is \emph{front tracking} (see e.g. \cite{FrontTracking}), especially in the plane.

\begin{example}\label{ex:front_tracking_1}
The initial condition in this example is shown in Figure \ref{fig:front_tracking_1} as the blue curve. It is evolved under dynamics \eqref{normalspeed} and \eqref{angles_relationship} with surface tension and mobility matrices are given by
\[
\sigma = \begin{pmatrix}
0 & 1 & 1\\
1 & 0 & \sqrt{2}\\
1 & \sqrt{2} & 0
\end{pmatrix} \quad \text{and} \quad
\mu = \begin{pmatrix}
0 & 1 & 1\\
1 & 0 & 1\\
1 & 1 & 0
\end{pmatrix}.
\]
The corresponding junction angles are $(\theta_1,\theta_2,\theta_3) = (90^\circ,135^\circ,135^\circ)$. The final configuration at time $t = 0.0107$, computed using Algorithm \ref{alg:ES} on a $4096 \times 4096$ grid, is shown as the red curve. The same configuration computed via front tracking is shown as the black curve. Table \ref{table:ex_front_tracking} shows the error as measured in the Hausdorff distance between the boundary $\partial \Sigma_1$ of phase $\Sigma_1$ computed using front tracking versus the proposed algorithm.
\end{example}

\begin{example}\label{ex:front_tracking_2}
The same initial condition as in Example \ref{ex:front_tracking_1} (blue curve in Figure \ref{fig:front_tracking_2}) was used for testing Algorithm \ref{alg:ES} but with different surface tensions:
\[
\sigma = \begin{pmatrix}
0 & \frac{5}{4} & \frac{3}{2}\\
\frac{5}{4} & 0 & 1\\
\frac{3}{2} & 1 & 0
\end{pmatrix}.
\]
The corresponding junction angles are $(\theta_1,\theta_2,\theta_3) \approx (138.6^\circ,97.18^\circ,124.2^\circ)$. The table below shows the error in phase $\Sigma_1$, once again as measured in the Hausdorff distance between the front tracking and the solution obtained from Algorithm \ref{alg:ES}.
\end{example}
\begin{table}[htp]
\centering
\footnotesize
\begin{tabular}{cccc}
\hline
 & \multicolumn{3}{l}{Errors and order, Example \ref{ex:front_tracking_1}} \\ \cline{2-4}
$\#$ Time steps	& $\#$ Grid points	& Hausdorff dist.	& Conv. rate\\
\hline
11 & $128 \times 128$ & 0.0149 & - \\
21 & $256 \times 256$ & 0.0075 & 0.98 \\
43 & $512 \times 512$ & 0.0059 & 0.35 \\
86 & $1024 \times 1024$ & 0.0041 & 0.52 \\
171 & $2048 \times 2048$ & 0.0027 & 0.58 \\
342 & $4096 \times 4096$ & 0.0021 & 0.40 \\
 & \multicolumn{3}{l}{Errors and order, Example \ref{ex:front_tracking_2}} \\ \cline{2-4}
$\#$ Time steps	& $\#$ Grid points	& Hausdorff dist.	& Conv. rate\\
\hline
11 & $128 \times 128$ & 0.0129 & - \\
21 & $256 \times 256$ & 0.0054 & 1.25 \\
43 & $512 \times 512$ & 0.0059 & -0.13 \\
86 & $1024 \times 1024$ & 0.0045 & 0.39 \\
171 & $2048 \times 2048$ & 0.0028 & 0.67 \\
342 & $4096 \times 4096$ & 0.0021 & 0.45 \\
\end{tabular}
\caption{Errors in the Hausdorff distance for the front tracking examples.}
\label{table:ex_front_tracking}
\end{table}

\begin{figure}[htp]
\centering
\subfigure[]{
\includegraphics[width=0.4\textwidth]{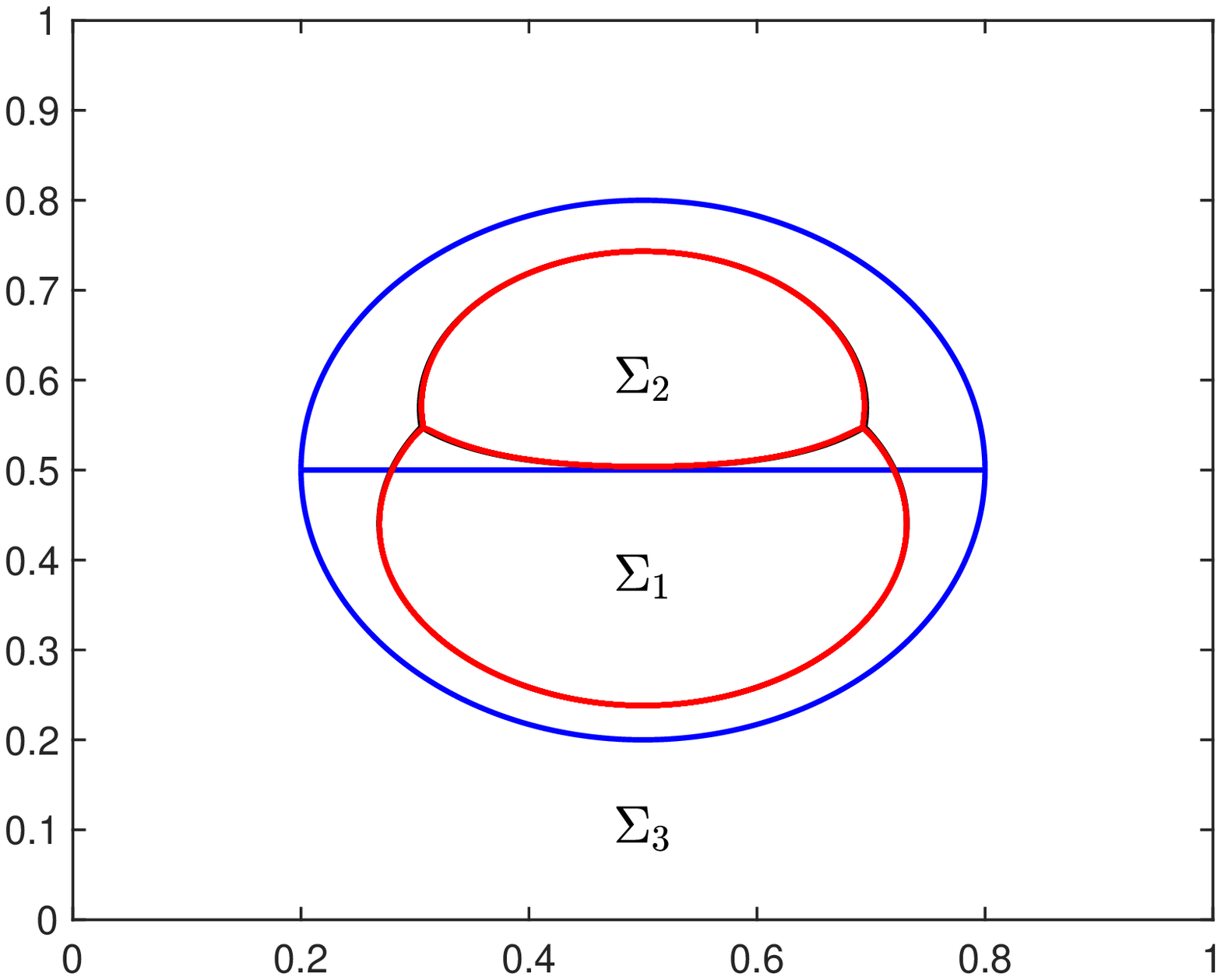}\label{fig:front_tracking_1}}
\hspace{2ex} 
\subfigure[]{
\includegraphics[width=0.4\textwidth]{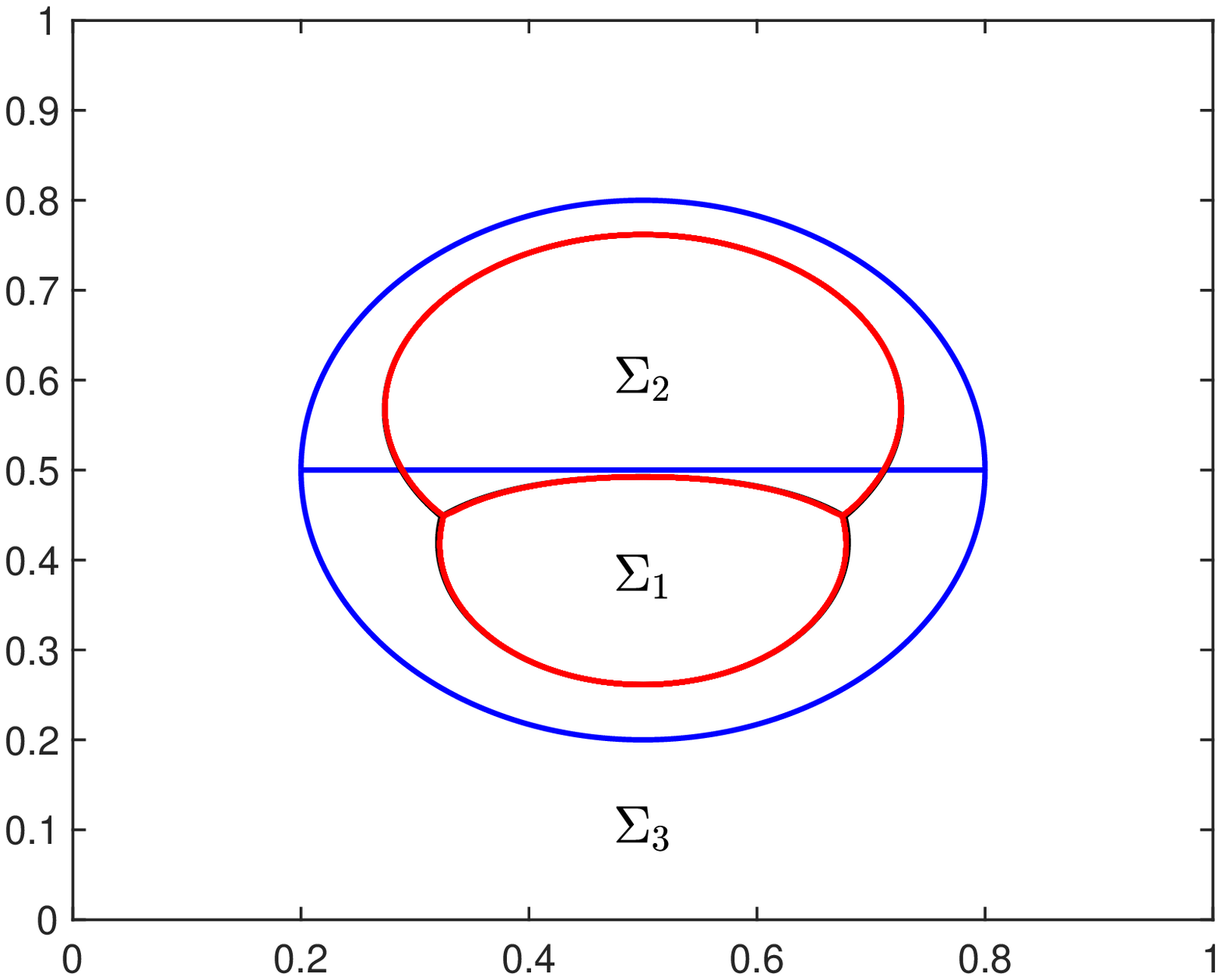}\label{fig:front_tracking_2}}
\caption{Evolution of a three-phase configuration where the blue curve shows the initial condition for two different choices of surfaces tensions. Final configuration computed using threshold dynamics Algorithm \ref{alg:ES} (red), compared to the benchmark result computed using front tracking (black).}
\label{fig:front_tracking}
\end{figure}

\subsection{Failure of the algorithm}

In this section, we present an example, in the spirit of the discussion of Section 5, for which the algorithm fails: wetting occurs when kernels obtained from the proposed construction are used, even though the desired sharp interface model is well-posed (the given surface tensions satisfy the triangle inequality).
Consequently, angles formed at the triple junction are not the naively expected ones, and the evolution differs from the intended dynamics.

\begin{example}\label{ex:wetting}
We revisit Example \ref{ex:grim_reaper_2}. Notice that $\mu_{1,2}$ does not affect the evolution since the interface $\Gamma_{1,2}$ has curvature zero. Consider then Example 6.2 but with $\mu_{1,2} = \mu_{2,1} = 1/2$ ($\frac{1}{\mu}$ is still conditionally negative definite). In this case wetting may occur as $a_{i,j} +b_{i,j}$ do not satisfy the triangle inequality and, in addition, $\Gamma$-convergence is not established.  Notice how this example exhibits the same features of Example \ref{ex:counterexample} described in \autoref{sec:nonconvergence}. Indeed, numerical simulations show that wetting occurs: the algorithm instantaneously nucleates phase 1 along the interface $\Gamma_{2,3}$. This thin layer of phase 1 remains between phases 1 and 2 throughout the evolution. Its thickness appears to depend on the time step size and scale as $\sqrt{\delta t}$. See Figures \ref{fig:wetting_phases_initial_vs_final} and \ref{fig:wetting_phases} and Tables \ref{table:ex_wetting} and \ref{table:ex_wetting_layer}. It is worth mentioning that when $\alpha$ and  $\beta$ are chosen to also satisfy \eqref{eq:choice_alpha_beta_triangle_inequality}, which guarantees no wetting, convergence is observed.
\end{example}


\begin{table}[htp]
\centering
\footnotesize
\begin{tabular}{cccc}
\hline
 & \multicolumn{3}{l}{Errors and order, Example \ref{ex:wetting}} \\ \cline{2-4}
$\#$ Time steps	& $\#$ Grid points	& Error	& Conv. rate\\
\hline
100 & $128 \times 128$ & \num{6.4819e-02} & - \\
200 & $256 \times 256$ & \num{1.8341e-02} & 1.82 \\
400 & $512 \times 512$ & \num{1.4839e-02} & 0.31 \\
800 & $1024 \times 1024$ & \num{1.5284e-02} & -0.04 \\
1600 & $2048 \times 2048$ & \num{1.7998e-02} & -0.24 \\
3200 & $4096 \times 4096$ & \num{1.9145e-02} & -0.09 \\
6400 & $8192 \times 8192$ & \num{1.9731e-02} & -0.04 \\
\end{tabular}
\caption{Errors are computed as the $l_\infty$ norm of the $l_1$ error of the characteristic functions of each phase for Example \ref{ex:wetting}.}
\label{table:ex_wetting}
\end{table}

\begin{table}[htp]
\centering
\footnotesize
\begin{tabular}{cccc}
\hline
 & \multicolumn{3}{l}{Errors and order, Example \ref{ex:wetting}} \\ \cline{2-4}
$\#$ Time steps	& $\#$ Grid points	& Area	& Conv. rate\\
\hline
100 & $128 \times 128$ & \num{1.6357e-02} & - \\
200 & $256 \times 256$ & \num{6.4087e-03} & 1.35 \\
400 & $512 \times 512$ & \num{3.7994e-03} & 0.75 \\
800 & $1024 \times 1024$ & \num{2.4853e-03} & 0.61 \\
1600 & $2048 \times 2048$ & \num{1.7276e-03} & 0.52 \\
3200 & $4096 \times 4096$ & \num{1.2108e-03} & 0.51 \\
6400 & $8192 \times 8192$ & \num{8.5044e-04} & 0.51 \\
\end{tabular}
\caption{Area of the thin layer formed along phases 2 and 3 in Example \ref{ex:wetting}.}
\label{table:ex_wetting_layer}
\end{table}


\begin{figure}[htp]
\centering
\subfigure{\includegraphics[width=0.45\textwidth]{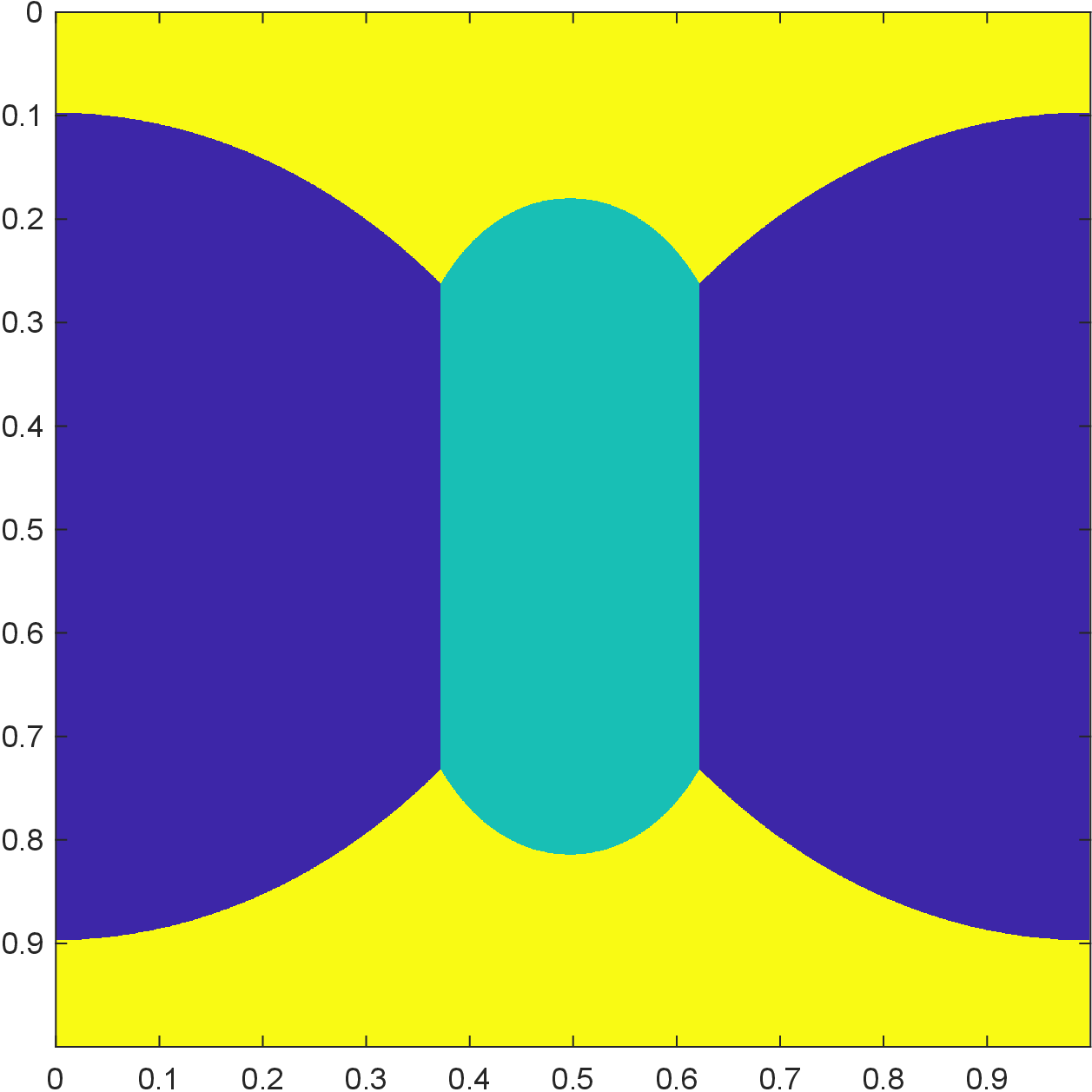}\label{fig:ex4_wetting_phases_initial}}
\subfigure{\includegraphics[width=0.45\textwidth]{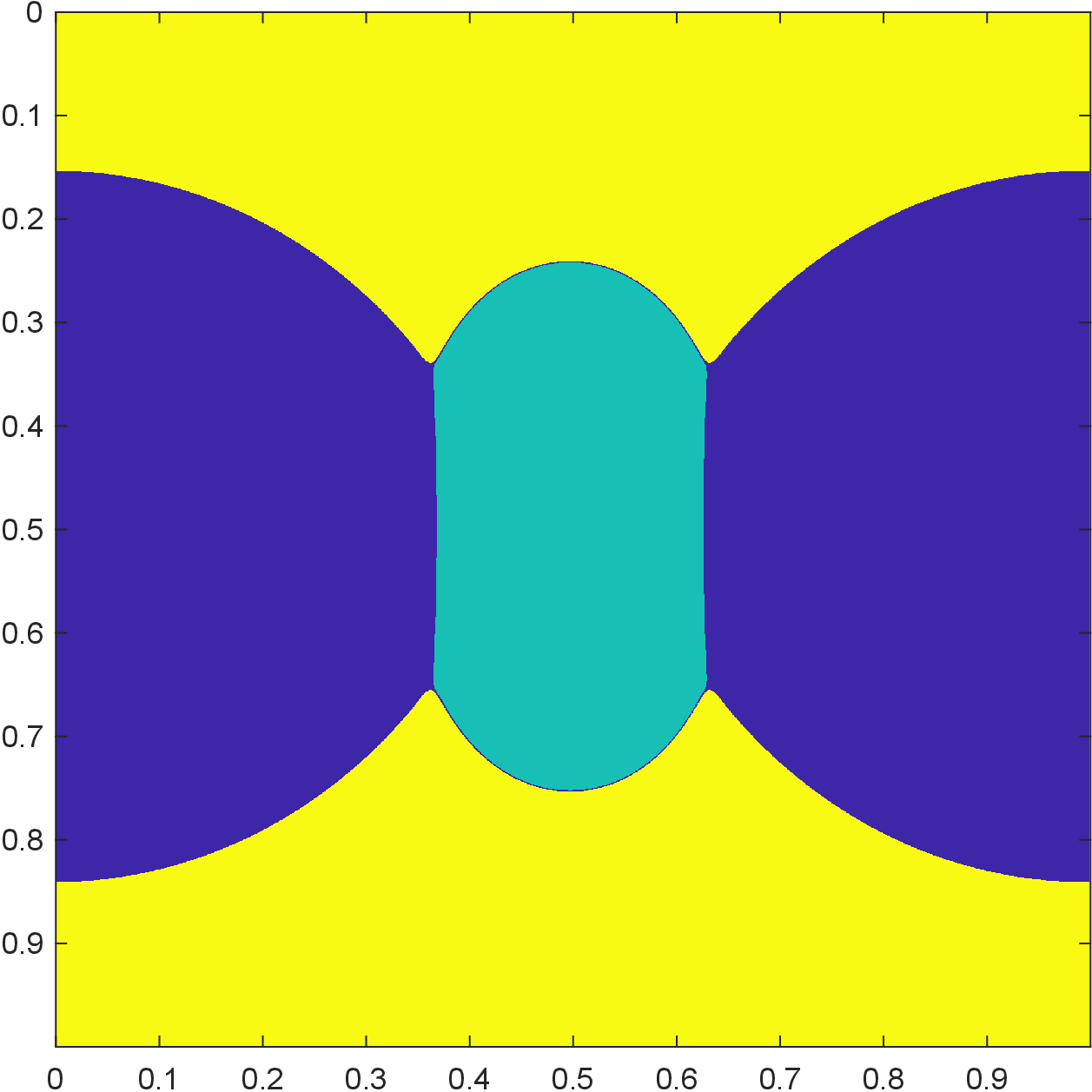}\label{fig:ex4_wetting_phases_final}}
\caption{Example of wetting in the multiphase setting even when the surface tensions satisfy the triangle inequality and the kernels are positive: \emph{Left:} In the initial condition there is no phase 1 between phases 2 and 3. \emph{Right:} Algorithm \ref{alg:ES} immediately nucleates a thin layer of phase 1 along the $\Gamma_{2.3}$ interface present in the initial condition. That thin wetting layer of phase 1, shown as the darkest region, remains between phases 2 and 3 throughout the evolution.}
\label{fig:wetting_phases_initial_vs_final}
\end{figure}

\begin{figure}[htp]
\centering
\subfigure[$n=128$]{
\includegraphics[width=0.3\textwidth]{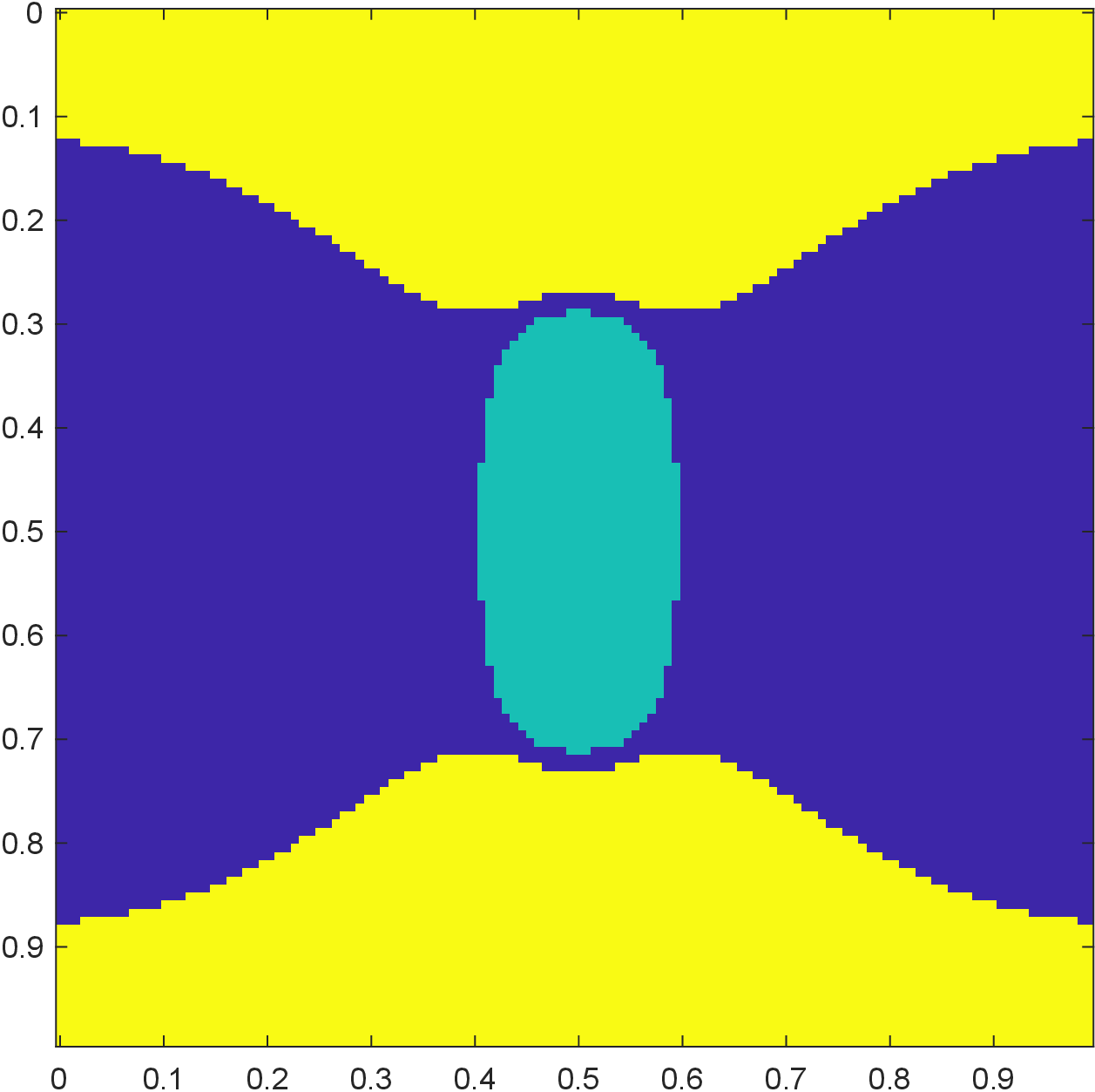}\label{fig:wetting_phases1}}
\subfigure[$n=256$]{
\includegraphics[width=0.3\textwidth]{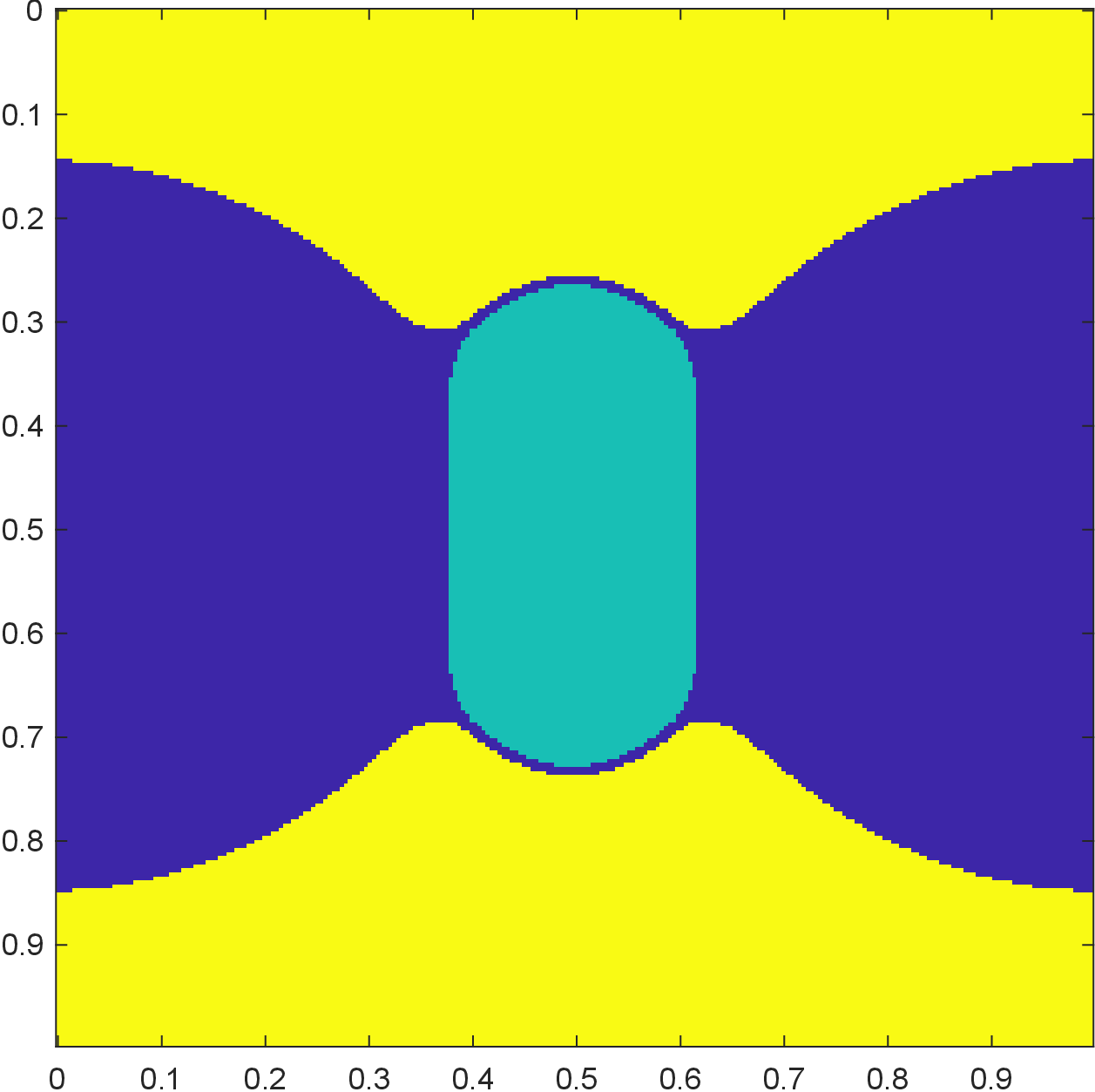}\label{fig:wetting_phases2}}
\subfigure[$n=512$]{
\includegraphics[width=0.3\textwidth]{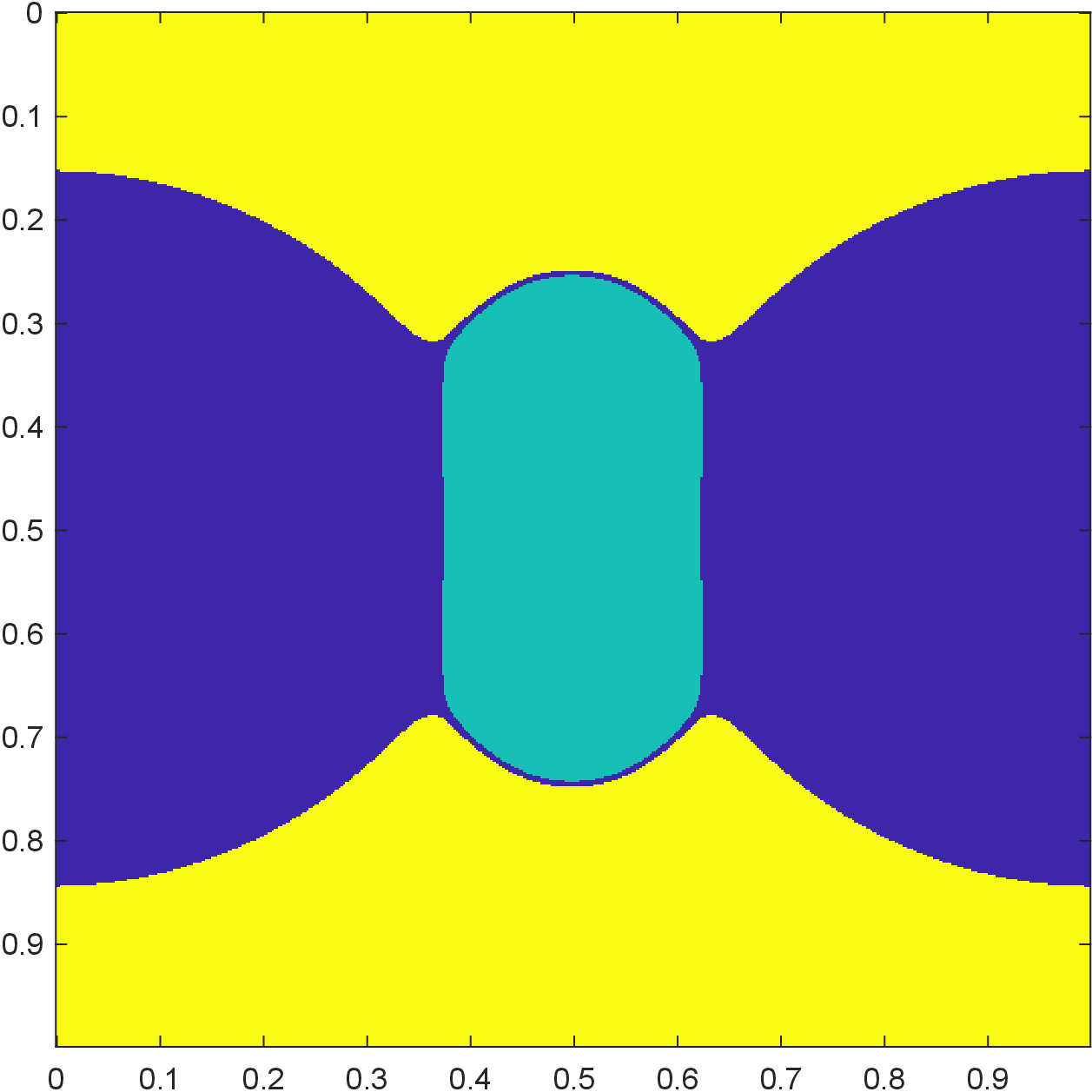}\label{fig:wetting_phases3}}
\subfigure[$n=1024$]{
\includegraphics[width=0.3\textwidth]{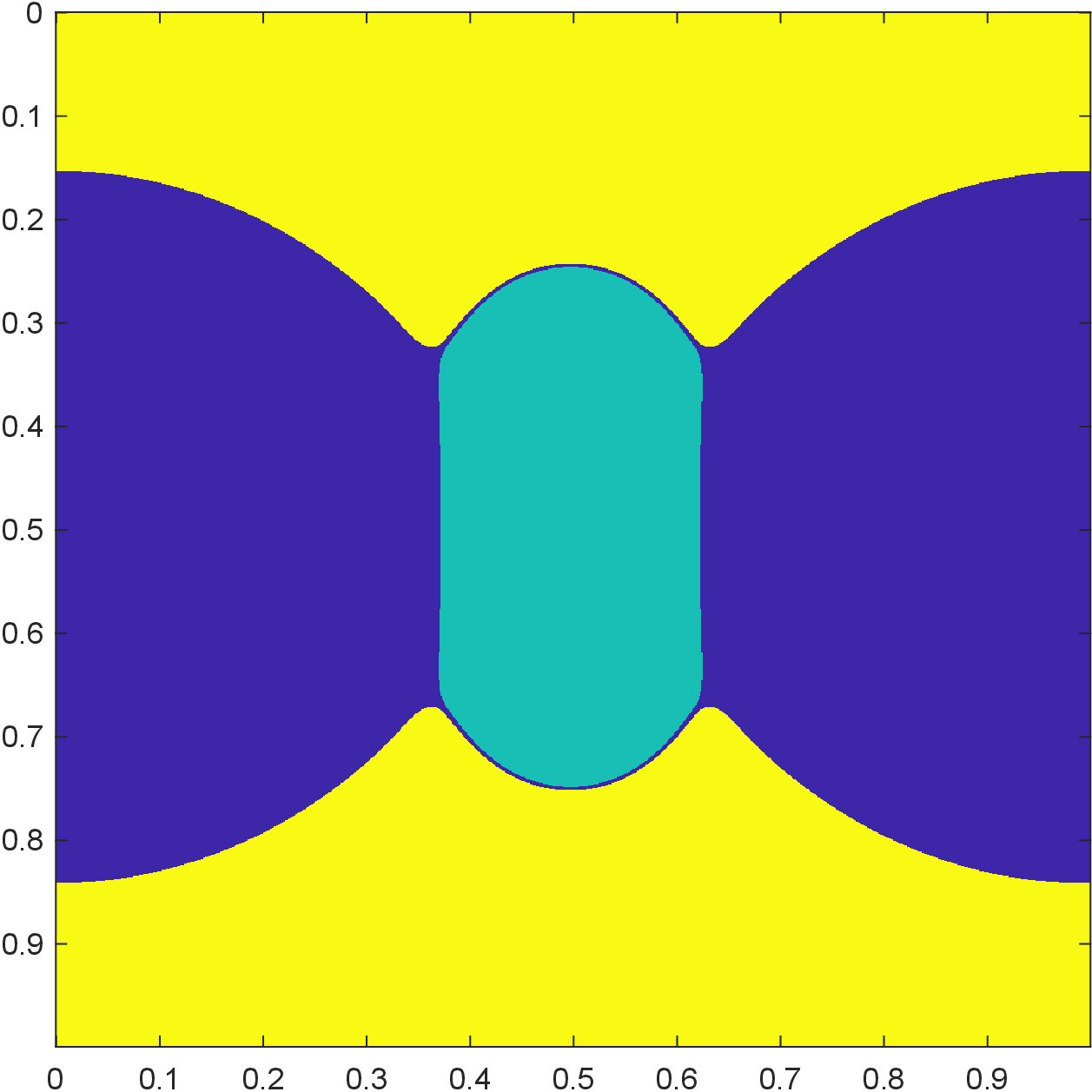}\label{fig:wetting_phases4}}
\subfigure[$n=2048$]{
\includegraphics[width=0.3\textwidth]{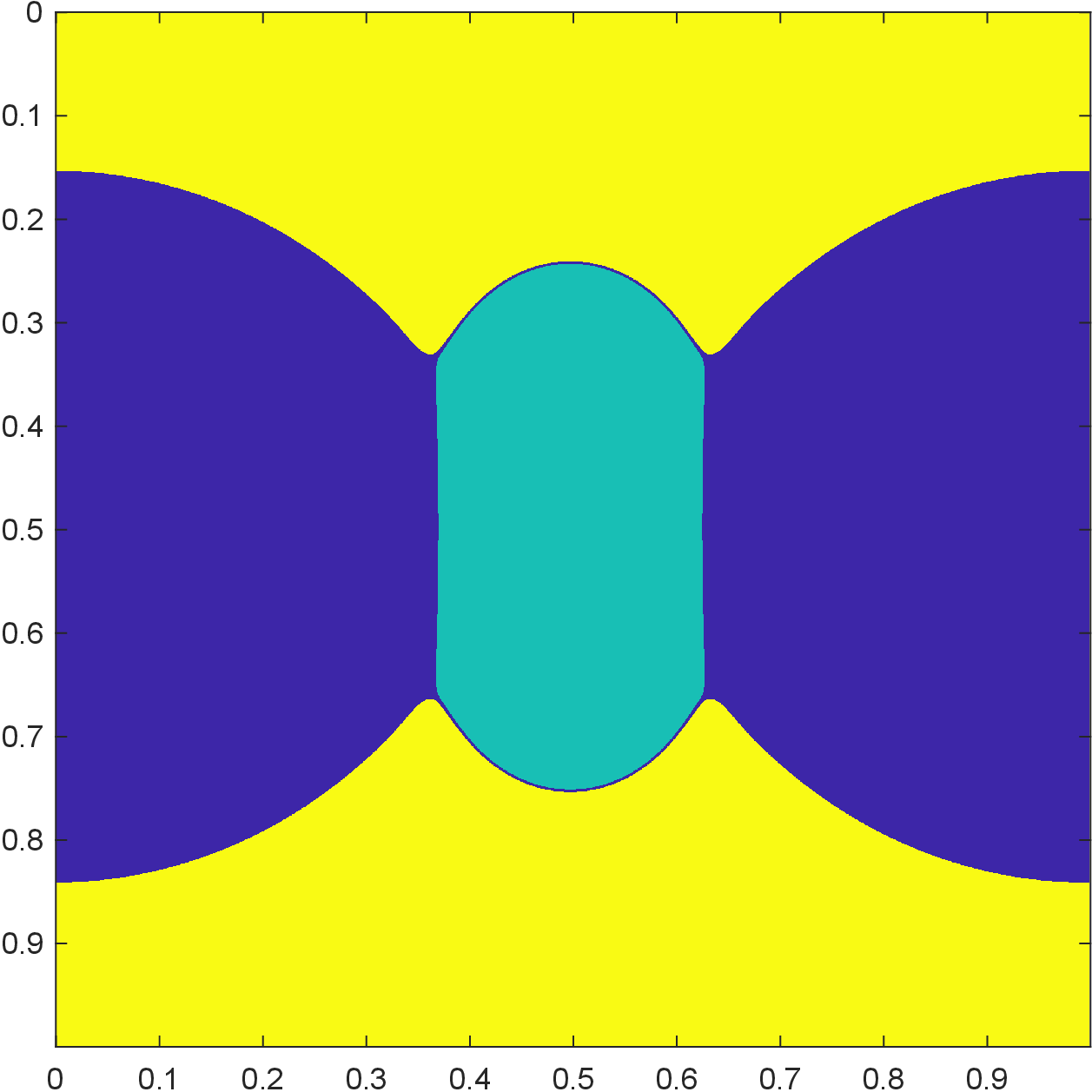}\label{fig:wetting_phases5}}
\subfigure[$n=4096$]{
\includegraphics[width=0.3\textwidth]{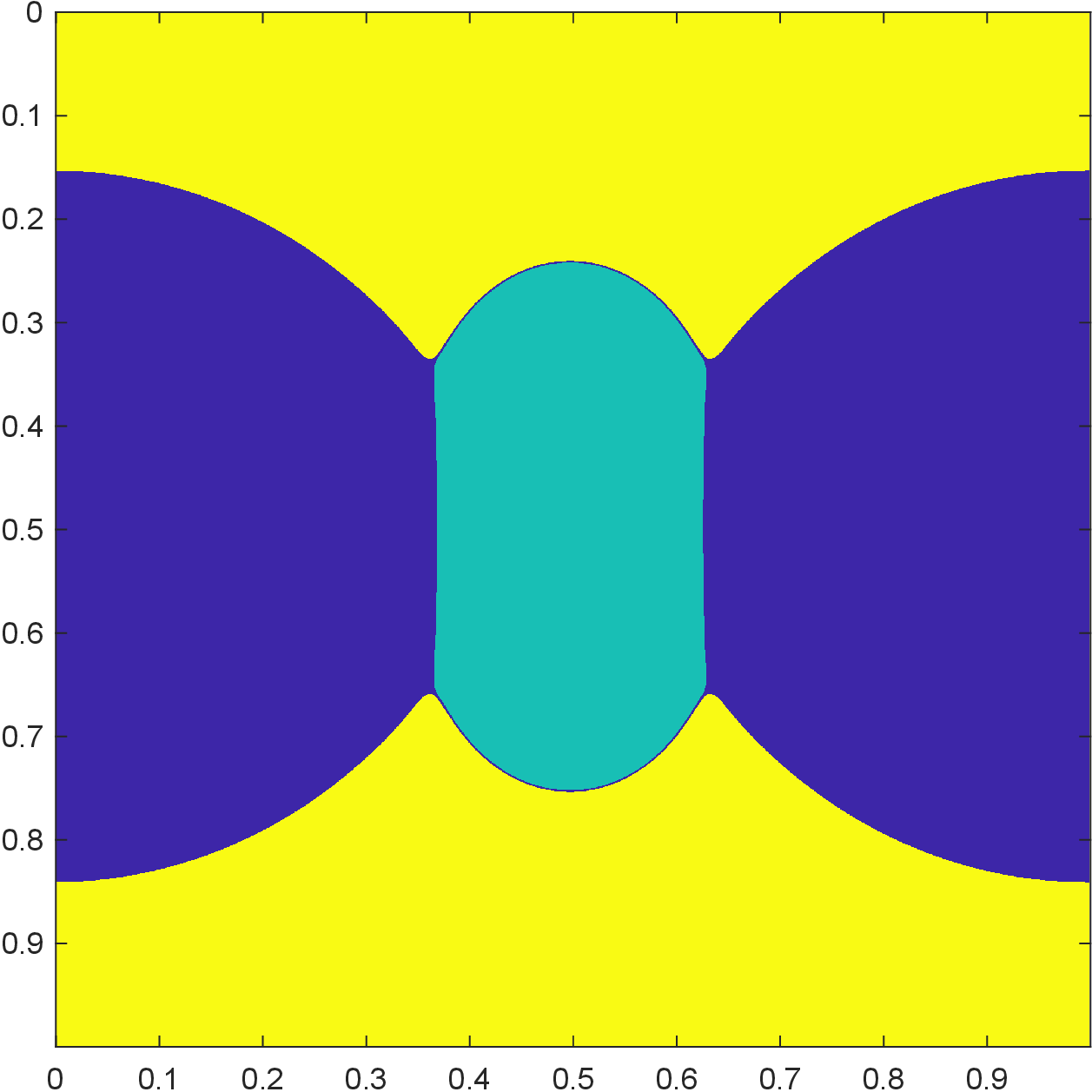}\label{fig:wetting_phases6}}
\caption{Example of wetting in the multiphase setting even when the surface tensions satisfy the triangle inequality and the kernels are positive.}
\label{fig:wetting_phases}
\end{figure}

\section{Conclusions}

We presented a simple and efficient algorithm for the mean curvature flow of a general $N$-phase network where the $N\choose 2$ isotropic surface tensions and $N\choose 2$ isotropic mobilities can be individually specified.
We showed that the algorithm is unconditionally gradient stable under mild conditions on the surface tensions and mobilities, which are satisfied, for instance, in the Read-Shockley model.
The $\Gamma$-convergence of the underlying approximate energies to the desired limit gives confidence that the new algorithm converges to the correct dynamics for important classes of surface tensions and mobilities that are commonly employed in simulations by materials scientists. 
However, we also presented counter examples to the convergence of the algorithm when kernels obtained from the proposed construction are used under certain conditions, indicating limitations to our current understanding of this class of numerical methods.

\section{Acknowledgements}
\noindent The authors gratefully acknowledge support from the NSF grant DMS-1719727.

\bibliographystyle{amsalpha}
\bibliography{bibliography}

\end{document}